\theoremstyle{plain}
\newtheorem{theorem}{Theorem}[section]
\newtheorem{proposition}[theorem]{Proposition}
\newtheorem{lemma}[theorem]{Lemma}
\newtheorem{coro}[theorem]{Corollary}
\theoremstyle{definition}
\newtheorem{definition}[theorem]{Definition}
\newtheorem{remark}[theorem]{Remark}
\newtheorem*{remark*}{Remark}
\newtheorem*{hypo*}{Hypothesis}
\newtheorem{example}[theorem]{Example}
\numberwithin{equation}{section}
\def\nat{\mathbb{N}}
\def\real{\mathbb{R}}
\def\matr{\real^{3 \times 3}}
\def\cA{\mathcal{A}}
\def\cC{\mathcal{C}}
\def\cF{\mathcal{F}}
\def\cG{\mathcal{G}}
\def\rmT{\mathrm{T}}
\def\ny{\nabla y}
\def\nP{\nabla P}
\def\el{\mathrm{el}}
\def\hard{\mathrm{hard}}
\def\homo{\mathrm{hom}}
\def\SL{\mathsf{SL}}
\def\sl{\mathsf{sl}}
\def\GL{\mathsf{GL}}
\def\tr{\mathrm{tr}}
\def\io{\int_\Omega}
\def\ia{\int_A}
\def\dd{\,\mathrm{d}}
\def\R{\mathbb{R}}
\def\eps{\varepsilon}
\renewcommand{\epsilon}{\eps}
\def\EEE{\color{black}}
\title[Homogenization in finite plasticity]{A homogenization result in finite plasticity}
\date{\today}
\author[E. Davoli]{Elisa Davoli}
\author[C. Gavioli]{Chiara Gavioli}
\author[V. Pagliari]{Valerio Pagliari}	
\begin{document}
	
	\begin{abstract}
		We carry out a variational study for integral functionals
		that model the stored energy of a heterogeneous material
		governed by finite-strain elastoplasticity with hardening.
		Assuming that the composite has a periodic microscopic structure,
		we establish the $\Gamma$-convergence
		of the energies in the limiting of vanishing periodicity.
		The constraint that plastic deformations belong to $\SL(3)$ poses
		the biggest hurdle to the analysis,
		and we address it by regarding $\SL(3)$ as a Finsler manifold.
		
		\medskip
		\noindent
		{\it 2020 Mathematics Subject Classification:}
		49J45; %Methods involving semicontinuity and convergence; relaxation
		%74B20; %Nonlinear elasticity
		74C15; %Large-strain, rate-independent theories of plasticity (including nonlinear plasticity)
		74E30; %Composite and mixture properties
		74Q05. %Homogenization in equilibrium problems of solid mechanics
		
		\smallskip
		\noindent
		{\it Keywords and phrases:}
		finite-strain elastoplasticity, $\Gamma$-convergence, homogenization, Finsler manifold. %high-contrast, two-scale convergence.
	\end{abstract}
	
	\maketitle
	
	{\parskip=0em \tableofcontents}

	\normalcolor

	\section{Introduction}
	
	This paper deals with the homogenization of some variational integrals
	related to the theory of finite plasticity.
	Precisely, we focus on those materials
	governed by finite-strain elastoplasticity with hardening.
	Starting from the microscopic level,
	we aim at retrieving effective macroscopic descriptions
	by means of a $\Gamma$-convergence approach.
	
	Let $\Omega \subset \real^3$ be an open, bounded, connected set with Lipschitz boundary, and let us denote by $\SL(3)$ the group of $3 \times 3$ real matrices with determinant equal to $1$. Our main result describes the limiting behavior of the functionals
	\begin{equation}\label{Feps}
	\cF_\eps(y,P) \coloneqq \int_\Omega W\left(\frac{x}{\eps},\ny(x) P^{-1}(x)\right) \dd x + \int_\Omega H\left(\frac{x}{\eps},P(x)\right) \dd x + \io |\nabla P(x)|^q \dd x,
	\end{equation}
	where $y \in W^{1,2}(\Omega;\real^3)$, $P \in W^{1,q}(\Omega;K)$ with $q > 3$ and $K\subset\SL(3)$ compact,
	and $W$ and $H$ encode, respectively, the periodic nonlinear elastic energy density and the hardening 
	of the elastoplastic material sitting in $\Omega$.
	The precise statement is contained in Theorem~\ref{stm:homo-fin-plast},
	and the exact mathematical setting of the analysis is detailed in Section~\ref{sec:math}.
	
	As we touched upon before, we work within the classical mathematical theory of elastoplasticity at finite strains (see, e.g., \cite{lubliner}). We assume that the elastic behavior of our sample $\Omega$ is independent of preexistent plastic distortions. This can be rephrased as the assumption that the deformation gradient $\nabla y$ associated with any deformation $y \colon \Omega\to \R^3$ of the body decomposes into an elastic strain and a plastic one. Within the framework of linearized elastoplasticity, the decomposition would have an additive nature; in the setting of finite plasticity, instead, a multiplicative structure is traditionally assumed \cite{kroner, Lee}. Following this approach, as in \cite{Mie02, Mie03}, we suppose that for any deformation $y\in W^{1,2}(\Omega;\R^3)$
	there exist an \textit{elastic strain} $F_\el \in L^2(\Omega;\matr)$ and
	a \textit{plastic strain} $P \in L^2(\Omega;{\rm SL}(3))$ such that
	$$\ny(x) = F_\el(x)P(x) \quad \text{for a.\,e.\ }x \in \Omega.$$
	Such multiplicative decomposition, recently justified in the setting of dislocation systems and crystal plasticity in \cite{conti.reina, conti.schloemerkamper.reina}, motivates the definitions of the energy functionals in \eqref{Feps}.
	Other constitutive models in finite plasticity have been also taken into account in the mathematical literature (see, e.g., \cite{davoli.francfort, grandi.stefanelli1, grandi.stefanelli2, naghdi}).
	We point out that the stored energy in \eqref{Feps} has already been considered in \cite{MieSt} in a variational setting. We refer to \cite{Mie03} for more general models in finite plasticity including hardening variables.
	Concerning the regularization via $\nabla P$ in \eqref{Feps},
	we note that it is common in engineering models and prevents the formation of microstructures, see \cite{BCHH,CHM}. We also refer to \cite{drs} for a discussion of its drawbacks and for alternative higher order regularizations.
	
	In the small strain regime, the homogenization of the plasticity equations has been undertaken in the series of works \cite{schweizer.veneroni, schweizer.haida1, schweizer.haida2} both in the periodic and in the aperiodic and stochastic settings. The framework of perfect plasticity in the case of a linear elastic response has been completely characterized in the seminal work \cite{francfort.giacomini}.
	See also \cite{hanke} for a homogenization result in the small strain regime with hardening.
	To the authors' knowledge, the only available results in the large strain setting are instead confined to the framework of crystal plasticity for stratified composites. In particular, we refer to \cite{cc,cc1} for the superlinear growth case, to \cite{cdf} for the linear growth scenario, and to \cite{cd} for some first homogenization analysis in the evolutionary setting.
	Theorem~\ref{stm:homo-fin-plast} fills a gap in the study of elastoplastic microstructures
	by providing a static homogenization result in the large strain regime.
	As an application of our contribution, in \cite{DGP2}
	the analysis of high-contrast elastoplastic materials is carried out.
		
	We conclude this introduction with a few words on the proofs.
	The equicoervicity of $\{\cF_\eps\}$ and the semicontinuity of the elastic term require uniform bounds on the plastic strains (cf.~\eqref{Pbound}, see also \cite{MieSt}). To enforce them, we prescribe the effective domain of the hardening to be a compact neighborhood of the identity in $\SL(3)$.
	The existence of an integral $\Gamma$-limit, instead, is based on the localization technique in the context of integral representation results. The biggest hurdle here is devising a version of the so-called \textit{fundamental estimate} that complies with the constraint that plastic deformations must belong to the compact neighborhood of the identity.
	Note that this poses an additional difficulty with respect to, e.g., \cite{BaMi,DFMK}, which deal with manifold-valued Sobolev functions.
	Indeed, not only we constrain our maps to take values in a manifold,
	but we also require them to range in a specific set.
	To accommodate for this, an adequate notion of convexity is needed. Therefore, we will regard $\SL(3)$ as a manifold endowed with a Finsler metric, and impose the compact neighborhood of the identity to be also geodesically convex.
	We refer to Section~\ref{sec:pre} for a review of the localization method and of the Finsler structure on $\SL(3)$.
	The characterization of the limiting energy density is obtained by a perturbative argument that grounds on standard homogenization results, see Theorem~\ref{stm:hom-classic}.
	
	\subsection*{Outline}
	The setup of our analysis and the main result, Theorem~\ref{stm:homo-fin-plast},
	is presented in Section~\ref{sec:math}.
	Section~\ref{sec:pre} contains some reminders about the technical tools to be used in the sequel.
	The proof of Theorem~\ref{stm:homo-fin-plast} is the subject of Section~\ref{sec:hom-fin-plast}, which we conclude with the discussion on compactness of bounded energy sequences and convergence of (almost) minimizers.
	
%%%%%%%%%%%%%%%%%%%%%%%%%%%%%%%%%%%%%%%%%%%%%%%%%%%%%%%%%%%%%%%%%%%%%%%%%%%%%%%%%%%%%%%%%%%%%%%%%%%%%%%%%%%%%%%%%%%%%%%%%%%%%%%%%%%%%%%%%%%%%%%%
	
	\section{Mathematical setting and results}\label{sec:math}
	
	Throughout the paper, $\Omega$ is an open, bounded, and connected set
	with Lipschitz boundary in $\real^3$
	(the analysis would not change significantly
	if we settled the problem in $\real^d$ with $d=2$ or $d>3$).
	We use $\matr$ and $\real^{3\times 3 \times 3}$
	to denote real-valued $3\times 3$ and $3\times 3 \times 3$ tensors, respectively.
	We use the notation $I$ for the identity matrix.
	The symbol $|\,\cdot\,|$ is indiscriminately adopted
	for the Euclidean norms in  $\real^3$, $\matr$ and $\real^{3\times 3 \times 3}$.
	To deal with plastic strains, we recall the classical notation
		\begin{align*}
			\SL(3) \coloneqq \{F \in \real^{3 \times 3} : \det F = 1\}. 
		\end{align*}
	In Subsection~\ref{sec:Finsler} we will endow $\SL(3)$
	with a metric structure and regard it as a \textit{Finsler manifold}. 	
	
	If $A \subset \real^3$ is a measurable set,
	we will denote by $\mathcal{L}^3(A)$ its three-dimensional Lebesgue measure.
	
	The building block of our study is the following variational notion of convergence:
	\begin{definition}\label{def:Gammaconv}
		Let $X$ be a set endowed with a notion of convergence. We say that the family $\{\cG_\eps\}$,
		with $\cG_\eps \colon X \to [-\infty,+\infty]$, \textit{$\Gamma$-converges} as $\eps \to 0$ to $\cG\colon X \to [-\infty,+\infty]$ if for all $x \in X$ and all infinitesimal sequences $\{\eps_k\}_{k\in\nat}$ the following holds:
		\begin{enumerate}
			\item for every sequence $\{x_k\}_{k\in\nat} \subset X$ such that $x_k \to x$, we have
			$$\cG(x) \le \liminf_{k\to+\infty} \cG_{\eps_k}(x_k);$$
			\item there exists a sequence $\{x_k\}_{k\in\nat} \subset X$ such that $x_k \to x$ and
			$$\limsup_{k\to+\infty} \cG_{\eps_k}(x_k) \le \cG(x).$$
		\end{enumerate}
	\end{definition}
	When $X$ is equipped with a topology $\tau$,
	we sometimes use expressions such as $\Gamma(\tau)$-convergence or $\Gamma(\tau)$-limit
	to emphasize the underlying convergence for sequences in $X$.
	We elaborate on the connections between $\Gamma$-convergence and homogenization in Subsection~\ref{sec:Gammaconv}.
	In what follows, for notational convenience,
	we indicate the dependence on $\eps_k$ by means of the subscript $k$ alone,
	e.g., $\cF_k \coloneqq \cF_{\eps_k}$.
	
	We collect here the assumptions under which
	the $\Gamma$-convergence of the functionals in \eqref{Feps} will be proved.
	
	Let $ Q\coloneqq [0,1)^3$ be the periodicity cell. 	
	We assume that
	the elastic energy density $W\colon \real^3 \times \matr \to [0,+\infty]$ satisfies the following:
	\begin{enumerate}[label=\textbf{E\arabic*:},ref={E\arabic*}]
		\item\label{E1} It is a Carath\'eodory function such that
		$W(\,\cdot\,,F)$ is $Q$-periodic for every $F\in \matr$.
		%	\item\label{E2} It is frame indifferent, that is,
		%		for all $F \in \GL_+(3)$ and all $R \in \SO(3)$
		%			$$W(RF) = W(F).$$
		\item\label{E-growth} It is $2$-coercive and has at most quadratic growth, i.e.,
		there exist $0 < c_1 \le c_2$ such that for a.\,e.\ $x \in \real^3$ and for all $F \in \matr$
		$$ c_1 |F|^2 \le W(x,F) \le c_2\left(|F|^2+1\right).$$
		\item\label{E-lip} It is $2$-Lipschitz:
		there exists $c_3 > 0$ such that for a.\,e.\ $x \in \real^3$ and for all $F_1,F_2 \in \matr$
		$$	|W(x,F_1) - W(x,F_2)| \le c_3 \left(1 + |F_1| + |F_2|\right)|F_1 - F_2|. $$
	\end{enumerate}
In \ref{E-growth}--\ref{E-lip} the exponent $2$ may be well replaced by some generic $p>1$. We do not pursue this direction just for notational convenience.

		Let us compare \ref{E1}--\ref{E-lip}
		with the common requirements for physically admissible elastic energy densities.
		Setting
		\[
			%\SO(3) &\coloneqq \{R \in \SL(3) : R^T R = R R^T = I\}, \\
			\GL_+(3) \coloneqq \{F \in \matr : \det F > 0\},
		\]
		a couple of natural conditions are $W \equiv +\infty$ on $\matr \setminus \GL_+(3)$, which rules out interpenetrations, and $\lim_{\det F \to 0} W(F) = +\infty$, which means that it should take infinite energy to squeeze a small block of material down to a point.
		Feasible elastic densities are also required to be frame indifferent
		and to attain their minimum, conventionally set to $0$, on the identity matrix $I$.
		
		In our analysis, it would be particularly challenging to preserve the constraint of positive determinant for deformation gradients.
		Therefore, we work in a simplified setting and consider the whole space of $3\times 3$ matrices as domain for $W$.
		Similarly, the quadratic growth from above in \ref{E-growth} rules out the blow up of the energy densities, but it is a rather common assumption in variational studies.
		Frame indifference, that is, $W(x,RF) = W(x,F)$ for a.\,e.\ $x \in \real^3$, for all $F \in \GL_+(3)$ and all rotations $R$, does not play any role in our analysis, thus we ignore it.
		Finally, up to adding a constant to $W$, the growth condition \ref{E-growth} is compatible with the requirement $\min W(x,\,\cdot\,) = W(x,I) = 0$ for a.\,e.\ $x \in \real^3$.
		%	\end{remark}
	
	As for the hardening functional,
	we assume that
	$H \colon \real \times \matr \to [0,+\infty]$
	meets the ensuing requirements:
	\begin{enumerate}[label=\textbf{H\arabic*:},ref={H\arabic*}]
		\item\label{H0} $H$ is a Carath\'eodory function such that $H(\,\cdot\,,F)$ is $Q$-periodic for every $F\in \matr$.
		\item\label{H1} Assume that a Finsler structure on $\SL(3)$ is assigned.
			For a.\,e.\ $x\in\Omega$, $H(x,F)$ is finite if and only if $F \in K$,
			where $K\subset \SL(3)$ is a geodesically convex, compact neighborhood of $I$.
		\item\label{H2} The restriction of $H(x,\,\cdot\,)$ to $K$ is Lipschitz continuous, uniformly in $x$.
	\end{enumerate}
	Let us spend some words on \ref{H1}.
	This rather strong hypothesis prescribes that
	the effective domain of $H(x,\,\cdot\,)$, namely the set
	$\left\{F \in \SL(3) : H(x,F) < +\infty\right\}$,
	is an $x$-independent compact set containing $I$. This kind of constraint has already appeared in the variational literature, see \cite[Formulas~(2.6a)--(2.6b)]{MieSt}. 
	A fundamental consequence of technical advantage is that
	uniform $L^\infty$-bounds on the plastic strains become available,
	provided the latter give rise to finite hardening.
	Indeed, if $K$ is as in \ref{H1}, then
	there exists $c_K>0$ such that
	\begin{equation}\label{Pbound}
		|F| + |F^{-1}| \le c_K
		\quad \text{for every }F \in K ,
	\end{equation}
	because $\SL(3)$ is by definition well separated from $0$.
	From such bound we infer that
	for any $F \in K$ and $G\in\matr$
	\begin{equation}\label{stm:Pbound}
		\left|G\right| = \left|G F^{-1} F\right| \le c_K\left| G F^{-1}\right|.
	\end{equation}
	These bounds will be employed several times in our analysis to obtain estimates on the deformation gradient $\nabla y$ starting from a control on the elastic strain $\nabla y P^{-1}$.
	
	The further requirement that 
	the effective domain $K$ is geodesically convex
	(in the Finsler sense we mentioned, see Subsection~\ref{sec:Finsler} for the details)
	roots in the localization argument that we adopt
	to establish the $\Gamma$-convergence (see Subsection~\ref{sec:Gammaconv}).
	We recall that
	a subset of a Finsler manifold is said to be geodesically convex
	if, for any couple of points in the set,
	there is a unique shortest path contained in the set
	that joins those two points.
	The existence of a compact set $K$
	complying with \ref{H1}
	is ensured by Proposition~\ref{stm:Bao} and Remark~\ref{stm:K-in-H2} below.
	
	We are now ready to state the main result of this paper, namely
	the homogenization formula for the functionals in \eqref{Feps}.
	Recalling that we chose $q > 3$, we work in the space $W^{1,2}(\Omega;\real^3)\times W^{1,q}(\Omega;\SL(3))$
	endowed with the topology $\tau$ characterized by
	\begin{equation}\label{eq:tau}
		(y_k,P_k) \stackrel{\tau}{\to} (y,P)
		\quad\text{if and only if}
		\quad
		\begin{cases}
			y_k \to y &\text{strongly in } L^2(\Omega;\real^3), \\[1 mm]
			P_k \to P &\text{uniformly}.
		\end{cases}
	\end{equation}
	The topology $\tau$ is the ``right one'' from a variational perspective in view of Corollary~\ref{cor:comp}.
	
	\begin{theorem}\label{stm:homo-fin-plast}
		Let $\cF_\eps$ be the functionals in \eqref{Feps},
		which we extend by setting 
		$$
			\cF_\eps(y,P)=+\infty
			\quad\text{on }
			\big[L^2(\Omega;\real^3)\times L^q(\Omega;\SL(3))\big]
			\setminus \big[W^{1,2}(\Omega;\real^3)\times W^{1,q}(\Omega;K)\big].
		$$
		If $W$ and $H$ satisfy \ref{E1}--\ref{E-lip}
		and \ref{H0}--\ref{H2}, respectively,
		then for all $y \in  L^2(\Omega;\real^3)$, $P \in L^q(\Omega;\SL(3))$
		the $\Gamma$-limit
		$$\cF(y,P) \coloneqq \Gamma(\tau)\mbox{-}\lim_{\eps \to 0} \cF_\eps(y,P)$$
		exists. We also have that
		$$\cF(y,P) = \left\lbrace\begin{aligned}
			\displaystyle\io \Big(W_\homo(\ny(x),P(x)) &+ H_\homo(P(x)) + |\nP(x)|^q\Big) \dd x \\
			& \text{if } (y,P) \in W^{1,2}(\Omega;\real^3)\times W^{1,q}(\Omega;K), \\[3pt]
			+\infty \qquad\qquad\qquad\qquad &\text{otherwise in } L^2(\Omega;\real^3)\times L^q(\Omega;\SL(3)),
		\end{aligned}\right.$$
		where $W_\homo \colon \matr \times K \to [0,+\infty)$ and $H_\homo \colon K \to [0,+\infty)$ are defined as
		\begin{gather*}
			W_\homo(F,G) \coloneqq \lim_{\lambda\EEE \to +\infty} \frac{1}{\lambda\EEE^3} \inf\left\lbrace \int_{(0,\lambda\EEE)^3} W\big(x,(F+\ny(x))G^{-1}\big) \dd x : y \in W^{1,2}_0((0,\lambda\EEE)^3;\real^3) \right\rbrace, \\
			H_\homo(F) \coloneqq \int_{Q} H(z,F) \dd z.
		\end{gather*}
	\end{theorem}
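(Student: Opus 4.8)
The plan is to prove the theorem via the localization method for integral functionals, along the lines announced in the introduction. For an open set $A\subseteq\Omega$ let $\cF_\eps(y,P,A)$ denote the functional in \eqref{Feps} with the three integrals restricted to $A$. The first observation is an a priori bound: if $\sup_k\cF_k(y_k,P_k)<+\infty$ along an infinitesimal sequence, then \ref{E-growth} together with \eqref{stm:Pbound} forces $\|\nabla y_k\|_{L^2(\Omega)}$ to be bounded, $\|\nabla P_k\|_{L^q(\Omega)}$ to be bounded, and $P_k(x)\in K$ a.e.; since $q>3$, Morrey's embedding together with Rellich's theorem makes the sublevel sets of $\cF_k$ sequentially $\tau$-precompact, and the constraints $P\in W^{1,q}(\Omega;K)$ and $y\in W^{1,2}(\Omega;\real^3)$ pass to $\tau$-limits (the former because $K$ is closed, the latter because bounded elastic energy bounds $\nabla y_k$ in $L^2$ via \eqref{stm:Pbound}). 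By the compactness of $\Gamma$-convergence I may therefore extract an infinitesimal subsequence along which the $\Gamma(\tau)$-limit $\cF_0(y,P,A)$ exists for every $A$ in a fixed countable base of open subsets of $\Omega$. It then suffices to identify $\cF_0$ with the functional $\cF$ in the statement: since $\cF$ does not depend on the chosen subsequence, the Urysohn property of $\Gamma$-convergence yields the full assertion, and the behaviour on the complement of $W^{1,2}(\Omega;\real^3)\times W^{1,q}(\Omega;K)$ is precisely the a priori bound just recorded.

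The heart of the matter, and the step I expect to be the main obstacle, is a \emph{fundamental estimate} compatible with the constraint $P\in K\subset\SL(3)$, which serves to promote $\cF_0(y,P,\cdot)$ to a measure. Given $\tau$-recovery sequences $(y_k^A,P_k^A)$ on $A'\Subset A$ and $(y_k^B,P_k^B)$ on $B$ for a common target $(y,P)$, and a cutoff $\varphi$ supported in $A$ with $\varphi\equiv 1$ on $A'$, the displacements are glued in the standard way as $\varphi\, y_k^A+(1-\varphi)\,y_k^B$. A linear combination of the plastic strains would leave $\SL(3)$; instead I define $\tilde P_k(x)$ as the point at parameter $\varphi(x)$ on the \emph{unique minimizing Finsler geodesic in $K$} from $P_k^A(x)$ to $P_k^B(x)$, which is well defined and $K$-valued exactly by the geodesic convexity of $K$ in \ref{H1}, and which equals $P_k^A$ on $A'$ and $P_k^B$ outside $A$. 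The energy increment over the overlap $(A\setminus A')\cap B$ is then estimated term by term: the elastic contribution through the $2$-Lipschitz bound \ref{E-lip} and the uniform bounds \eqref{Pbound}; the hardening through the Lipschitz continuity \ref{H2}; and the $|\nabla P|^q$ contribution through $q$-growth, after observing that the velocity of the geodesic is of the order of its length, hence of the order of $\|P_k^A-P_k^B\|_{L^\infty}$, while its derivatives in the two endpoints are close to the identity and to zero respectively — here one uses the smooth dependence of minimizing geodesics on their endpoints on the compact, cut-locus-free set $K\times K$. Since both $P_k^A$ and $P_k^B$ converge uniformly to $P$, all error terms are either infinitesimal or of the order of $\mathcal{L}^3\big((A\setminus A')\cap B\big)$, which is the estimate sought.

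From the fundamental estimate one obtains subadditivity of $A\mapsto\cF_0(y,P,A)$; superadditivity is immediate from the additive, nonnegative integrand, inner regularity comes from the construction, and $\cF_0(y,P,\cdot)$ is dominated from below by $\int_A\big(c_1|\nabla y\,P^{-1}|^2+|\nabla P|^q\big)\dd x$ (by \ref{E-growth} and lower semicontinuity) and from above by $\int_A\big(c_2(|\nabla y\,P^{-1}|^2+1)+C+|\nabla P|^q\big)\dd x$ (by \ref{E-growth}, \ref{H2}, testing with the constant sequence). The De Giorgi--Letta criterion then makes $\cF_0(y,P,\cdot)$ the restriction of a Radon measure absolutely continuous with respect to $\mathcal{L}^3$, and the integral representation theorem recalled in Section~\ref{sec:pre} provides a Carath\'eodory integrand $f\colon\Omega\times\matr\times K\times\real^{3\times 3\times 3}\to[0,+\infty)$ with $\cF_0(y,P,A)=\int_A f\big(x,\nabla y(x),P(x),\nabla P(x)\big)\dd x$; invariance of $\cF_\eps$ under translations of $y$ by constants removes the dependence of $f$ on the value of $y$.

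It remains to identify $f$, which is the perturbative part of the argument. Choosing $y(x)=Fx$ affine, $P\equiv G\in K$ constant and $A$ a cube, I compute the $\Gamma$-limit directly with $P_k\equiv G$: the hardening term converges by the Riemann--Lebesgue lemma to $\mathcal{L}^3(A)\int_Q H(z,G)\dd z$, while the elastic term is treated by applying the classical homogenization result, Theorem~\ref{stm:hom-classic}, to the integrand $\xi\mapsto W(x,\xi G^{-1})$ — which is still Carath\'eodory, $Q$-periodic, $2$-coercive (via \eqref{stm:Pbound}), of quadratic growth and $2$-Lipschitz — whose multi-cell homogenized density at $F$ is precisely the right-hand side defining $W_\homo(F,G)$. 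Hence $f(x,F,G,0)=W_\homo(F,G)+H_\homo(G)$ for a.e.\ $x$, and for all $F\in\matr$, $G\in K$ by continuity and density. For a general $\Xi$, a blow-up at a Lebesgue point of $f$, combined with the upper bound of Theorem~\ref{stm:hom-classic} in its form allowing a slowly varying modulation (the non-constancy of the frozen plastic strain being absorbed through \ref{E-lip} and \ref{H2}), gives $f(x,F,G,\Xi)\le W_\homo(F,G)+H_\homo(G)+|\Xi|^q$. The matching lower bound follows from superadditivity of the $\liminf$: along any recovery sequence for $\cF_0(y,P,\Omega)$ one has $\cF_0(y,P,\Omega)\ge\liminf_k\big[\int_\Omega W(x/\eps_k,\nabla y_k P_k^{-1})\dd x+\int_\Omega H(x/\eps_k,P_k)\dd x\big]+\liminf_k\int_\Omega|\nabla P_k|^q\dd x$, where the first $\liminf$ is bounded below by $\int_\Omega\big(W_\homo(\nabla y,P)+H_\homo(P)\big)\dd x$ (reduce to $P$ fixed by \ref{E-lip}, \ref{H2} and uniform convergence, then use Riemann--Lebesgue and the lower bound of Theorem~\ref{stm:hom-classic} on a fine partition) and the second is bounded below by $\int_\Omega|\nabla P|^q\dd x$ by weak lower semicontinuity in $L^q$. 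Therefore $f(x,F,G,\Xi)=W_\homo(F,G)+H_\homo(G)+|\Xi|^q$, so $\cF_0=\cF$ regardless of the extracted subsequence, and the proof is complete.
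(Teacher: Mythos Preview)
Your overall strategy—localization, a fundamental estimate that glues plastic strains along Finsler geodesics in $K$, De~Giorgi--Letta, and identification of the density by freezing the plastic strain and invoking Theorem~\ref{stm:hom-classic}—is precisely the paper's. The fundamental-estimate construction you describe (geodesic interpolation $\tilde P_k$ parametrized by the cutoff $\varphi$, with the velocity controlled by $|P_k^A-P_k^B|$ via smooth dependence on endpoints) matches Proposition~\ref{stm:fundest} almost verbatim.

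There is one point to correct. You write that ``the integral representation theorem recalled in Section~\ref{sec:pre} provides a Carath\'eodory integrand $f\colon\Omega\times\matr\times K\times\real^{3\times 3\times 3}\to[0,+\infty)$''. No such theorem is recalled there, and the paper does not take this route: a Buttazzo--Dal~Maso style representation for functionals of pairs $(y,P)$ with $P$ constrained to a manifold is not an off-the-shelf result. What the paper actually obtains from De~Giorgi--Letta plus Radon--Nikodym is merely that, for each fixed $(y,P)$, the set function $\cF(y,P,\cdot)$ equals $\int_{(\cdot)} f\,\mathrm{d}x$ for some $f\in L^1_{\mathrm{loc}}$ depending on $(y,P)$ (Theorem~\ref{stm:homo-fin-plast-bis}). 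The identification then bypasses any abstract representation: one fixes a Lebesgue point $x_0$, introduces the auxiliary functional $\cG_k(y,P(x_0),A)$ with \emph{constant} plastic strain, estimates $|\cF_k(y_k,P_k,A)-\int_A|\nabla P_k|^q-\cG_k(y_k,P(x_0),A)|$ via \ref{E-lip} and \ref{H2} (this is the key inequality~\eqref{distG}), applies Theorem~\ref{stm:hom-classic} and Riemann--Lebesgue to $\cG_k$, takes $A=B_r(x_0)$, divides by $\mathcal L^3(B_r(x_0))$ and lets $r\to0$. Your closing paragraph (the direct $\liminf$/$\limsup$ bounds via freezing $P$ and a ``fine partition'') is in the same spirit and would substitute for the detour through a Carath\'eodory $f$; the paper's ball-shrinking version is just the clean way to carry it out.
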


By standard $\Gamma$-convergence arguments, we obtain the following.
\begin{coro}\label{cor:comp}
	Let $\{\eps_k\}$ be an infinitesimal sequence.
	\begin{enumerate}[label=(\roman*)]
		\item Suppose that $\{(y_k,P_k)\}\subset L^2(\Omega;\real^3)\times L^q(\Omega;\SL(3))$ satisfies
		$$
		\|y_k\|_{L^2(\Omega;\real^3)}\le c, \qquad \cF_k(y_k,P_k) \le c
		$$
		for some $c>0$, uniformly in $k$. Then, there exist subsequences of $\{\eps_k\}$, $\{y_k\},$ and $\{P_k\}$, which we do not relabel, as well as $y \in W^{1,2}(\Omega;\real^3)$ and $P \in W^{1,q}(\Omega;K)$, such that $(y_k,P_k) \stackrel{\tau}{\to} (y,P)$.
		\item Let $\{(y_k,P_k)\}\subset W^{1,2}_0(\Omega;\real^3)\times W^{1,q}(\Omega;K)$ be a sequence of almost minimizers, that is,
		$$
		\lim_{k\to+\infty} \Big(\cF_k(y_k,P_k) - \inf \cF_k(y,P)\Big) = 0,
		$$
		where the infimum is taken over $W^{1,2}_0(\Omega;\real^3)\times W^{1,q}(\Omega;K)$. Then, there exists a minimizer $(y,P) \in W^{1,2}_0(\Omega;\real^3)\times W^{1,q}(\Omega;K)$ of $\cF$ such that, up to subsequences, $(y_k,P_k) \stackrel{\tau}{\to} (y,P)$. Moreover,
		$$
		\lim_{k\to+\infty} \Big(\inf \cF_k - \min \cF\Big) = 0.
		$$
	\end{enumerate}
\end{coro}

%%%%%%%%%%%%%%%%%%%%%%%%%%%%%%%%%%%%%%%%%%%%%%%%%%%%%%%%%%%%%%%%%%%%%%%%%%%%%%%%%%%%%%%%%%%%%%%%%%%%%%%%%%%%%%%%%%%%%%%%%%%%%%%%%%%%%%%%%%%%%%%%
	
	\section{Preliminaries}\label{sec:pre}
	We gather in this section the technical tools
	to be employed in the sequel.
	
	\subsection{Localization and integral representation}\label{sec:Gammaconv}
	To the aim of laying the ground for the proof of Theorem~\ref{stm:homo-fin-plast},
	we briefly outline the localization technique
	in the context of integral representation results for $\Gamma$-limits.
	More detail and a thorough treatment of $\Gamma$-convergence,
	which we introduced in Definition~\ref{def:Gammaconv},
	may instead be found in the monographs \cite{BrDFr,DalM,braides}.
	For definiteness, we report on a well-known result
	(see, e.g., \cite[Theorem~14.5]{BrDFr}) that is underpinned by the localization method.
	
	\begin{theorem}\label{stm:hom-classic}
	Let $g\colon \real^3 \times \matr \to [0,+\infty)$ be a Borel  function
	that is $Q$-periodic in its first argument and
	that satisfies standard $p$-growth conditions for some $p \in (1,+\infty)$.
	For $\eps>0$ and $y \in L^p(\Omega;\real^3)$
	we define
	\[
		\cG_\eps(y) \coloneqq 
				\begin{cases}
				\displaystyle{
					\io g\bigg(\frac{x}{\eps},\ny(x)\bigg) \dd x
					}
				& \text{if } y \in  W^{1,p}(\Omega;\real^3), \\[3pt]
				+\infty
				& \text{otherwise}.
				\end{cases}
	\]
	Then, we have
	\[
		\Gamma(L^p)\mbox{-}\lim_{\eps\to 0} \cG_\eps(y)
		= 
					\begin{cases}
					\displaystyle{
						\io g_\homo\big(\ny(x)\big) \dd x
						}
					& \text{if } y \in  W^{1,p}(\Omega;\real^3), \\[3pt]
					+\infty
					& \text{otherwise},
					\end{cases}
	\]
	where the $\Gamma$-limit is taken with respect to the strong $L^p(\Omega;\mathbb{R}^3)$-topology, \EEE and
	$g_\homo \colon\matr \to [0,+\infty)$ is a quasiconvex function
	characterized by the asymptotic homogenization formula
	$$
		g_\homo(F) = \lim_{\lambda\EEE \to +\infty} \frac{1}{\lambda\EEE^3} \inf \left\lbrace \int_{(0,\lambda\EEE)^3} g\big(x,F+\ny(x)\big) \dd x : y \in W^{1,p}_0((0,\lambda\EEE)^3;\real^3) \right\rbrace
	$$
	for all $F \in \matr$.
	\end{theorem}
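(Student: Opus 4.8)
The result is classical; the route I would follow is the localization method combined with an integral–representation theorem (this is precisely the scheme alluded to in the statement), and I sketch it now.

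\emph{Localization and compactness.} For an open set $A\subseteq\Omega$ and $y\in L^p(\Omega;\real^3)$, set $\cG_\eps(y,A)\coloneqq\ia g(x/\eps,\ny)\dd x$ if $y|_A\in W^{1,p}(A;\real^3)$ and $+\infty$ otherwise. By the compactness of $\Gamma$-convergence (see \cite{DalM}) — realized first on a countable base of open sets via a diagonal argument and then extended to all open subsets of $\Omega$ using the locality and inner-regularity properties of $\Gamma$-limits — one extracts an infinitesimal sequence $\{\eps_j\}$ and a functional $\cG'(\cdot,A)=\Gamma(L^p)\text{-}\lim_j\cG_{\eps_j}(\cdot,A)$, defined for every open $A\subseteq\Omega$. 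The goal is to prove that $\cG'(y,\Omega)=\io g_\homo(\ny)\dd x$ on $W^{1,p}(\Omega;\real^3)$, independently of the subsequence; a Urysohn-type property of $\Gamma$-convergence then gives the full $\Gamma$-limit.

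\emph{Measure property and integral representation.} The $p$-growth of $g$ passes to $\cG'$, so that $c_1\ia|\ny|^p\dd x\le\cG'(y,A)\le c_2\ia(1+|\ny|^p)\dd x$. The decisive tool is the \emph{fundamental estimate}: for open sets $A'\Subset A''$ and $B$ and maps $y_1,y_2\in W^{1,p}$, one glues $y_1$ on $A'$ to $y_2$ off $A''$ by a cut-off $\varphi\in C_c^\infty(A'')$, $\hat y\coloneqq\varphi y_1+(1-\varphi)y_2$, and bounds $\cG_\eps(\hat y,A'\cup B)$ by $\cG_\eps(y_1,A'')+\cG_\eps(y_2,B)$ plus a remainder supported on the overlap $(A''\setminus\overline{A'})\cap B$; choosing $\varphi$ among finitely many nested layers makes this remainder an arbitrarily small fraction of the two competitor energies. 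Combined with the De Giorgi–Letta criterion, this shows that $A\mapsto\cG'(y,A)$ is the trace of a Borel measure, is local, and is $L^p$-lower semicontinuous; moreover $\cG_\eps$, hence $\cG'$, is invariant under $y\mapsto y+c$. A standard integral representation theorem (see, e.g., \cite{BrDFr}) then yields a Carathéodory function $f\colon\real^3\times\matr\to[0,+\infty)$ with $p$-growth, quasiconvex in its second argument, such that $\cG'(y,A)=\ia f(x,\ny(x))\dd x$.

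\emph{Homogeneity and identification.} Since $g(\cdot/\eps,\cdot)$ is periodic with vanishing period $\eps Q$, the limit density cannot see the slow variable: translating competitors by $\eps$-lattice vectors and comparing energies forces $f=f(\xi)$. As $f$ is $x$-independent, it suffices to test the representation on a small cube $C\subseteq\Omega$ with the affine map $x\mapsto Fx$, so $f(F)=\mathcal L^3(C)^{-1}\cG'(Fx,C)$. For the upper bound, fix $\lambda\in\nat$, let $v_\lambda\in W^{1,p}_0((0,\lambda)^3;\real^3)$ be a near-minimizer of the cell problem at scale $\lambda$, and use the oscillatory ansatz $y_\eps(x)=Fx+\eps v_\lambda^{\mathrm{per}}(x/\eps)$, $v_\lambda^{\mathrm{per}}$ being the $(0,\lambda)^3$-periodic extension of $v_\lambda$: then $y_\eps\to Fx$ in $L^p$ and, by the Riemann–Lebesgue lemma, $\cG_\eps(y_\eps,C)\to\mathcal L^3(C)\,\lambda^{-3}\!\int_{(0,\lambda)^3}g(z,F+\nabla v_\lambda)\dd z$, so a diagonalization over $\lambda\to\infty$ gives $f(F)\le g_\homo(F)$. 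For the lower bound, a Fekete-type argument — exploiting that $W^{1,p}_0$ competitors on abutting cells glue without interfacial cost — shows that the limit defining $g_\homo$ exists, equals the infimum over $\lambda$ of the rescaled cell infima (so $g_\homo$ is no larger than any of them), and is quasiconvex with $p$-growth; then, for any $y_\eps\to Fx$ in $L^p$ with bounded energy, tiling $C$ by $\eps\lambda$-periodicity cells and using the fundamental estimate to impose the affine datum on a thin shell around each cell yields $\liminf_\eps\cG_\eps(y_\eps,C)\ge\mathcal L^3(C)\,g_\homo(F)$, whence $f(F)\ge g_\homo(F)$. Thus $f=g_\homo$ regardless of the subsequence, the $\Gamma(L^p)$-limit exists with the asserted integrand, and $g_\homo$ is quasiconvex as the density of an $L^p$-lower semicontinuous functional.

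\emph{Main obstacle.} The technically demanding points are the fundamental estimate and, inseparably from it, the matching of boundary data: controlling the energy generated in the transition layers — both for the subadditivity/measure property and for forcing affine boundary values in the lower-bound estimate — so that it is asymptotically negligible. A secondary subtlety is the Fekete-type almost-subadditivity of the rescaled cell infima, which is what makes the limit in the homogenization formula well defined.
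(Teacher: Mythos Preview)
The paper does not supply its own proof of this theorem: it is quoted as a known result, with a pointer to \cite[Theorem~14.5]{BrDFr}, and the surrounding discussion merely outlines the localization method (compactness of $\Gamma$-limits, fundamental estimate, De~Giorgi--Letta) as background for the paper's main result. Your sketch follows precisely that classical route and is correct; there is nothing to compare beyond noting that you have fleshed out the argument the paper only gestures at.
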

	
	As starting point to establish this theorem,
	one \EEE resorts to a general property of $\Gamma$-convergence
	which ensures that
	if $X$ is a separable metric space,
	then any family $\{\cG_\eps\}$
	with $\cG_\eps \colon X \to [-\infty,+\infty]$
	has a $\Gamma$-convergent subsequence
	(see, e.g., \cite[Proposition~7.9]{BrDFr}).	
	Such $\Gamma$-compactness principle yields that,
	up to subsequences, 
	an abstract $\Gamma(L^p)$-limit of $\{\cG_\eps\}$ exists.
	One is then naturally led to wonder
	whether the limit is in turn a functional of integral type.
	In order to show that this is in fact the case,
	a \textit{localization argument} is employed.
	This amounts to regard 
	$\cG_\eps$ as a function of the pair $(y,A)$,
	where $y \in L^p(\Omega;\real^3)$ and
	$A \in \cA(\Omega) \coloneqq \{\text{open subsets of }\Omega\}$; more precisely, setting
	$$
		\cG_\eps(y,A) \coloneqq \int_{A} g\bigg(\frac{x}{\eps},\ny(x)\bigg) \dd x
		\qquad
		\text{if } y \in  W^{1,p}(A;\real^3)
	$$
	and $\cG_\eps(y,A) \coloneqq +\infty$ otherwise, the idea is to focus on the properties of $\cG_\eps(y,\,\cdot\,)$ as a set function.
	In this respect, we recall some terminology.
	
	\begin{definition}\label{setfunct}
	Let $\alpha\colon \cA(\Omega) \to [0,+\infty]$ be a set function.
	We say that $\alpha$ is
	\begin{itemize}
	\item an increasing set function
		if $\alpha(\emptyset) = 0$ and $\alpha(A) \le \alpha(B)$ if $A \subseteq B$;
	\item subadditive
		if $\alpha(A \cup B) \le \alpha(A) + \alpha(B)$ for all $A,B$;
	\item superadditive
		if $\alpha(A \cup B) \ge \alpha(A) + \alpha(B)$ 
		for all $A,B$ such that $A \cap B = \emptyset$;
	\item inner regular
		if $\alpha(A) = \sup\left\lbrace \alpha(B) : B \in \cA(\Omega), \, B \Subset A \right\rbrace$.
	\end{itemize}
	\end{definition}
	The De Giorgi-Letta criterion
	(see, e.g., \cite[Theorem~10.2]{BrDFr})	states that
	an increasing set function is a restriction to $\cA(\Omega)$ of a Borel measure
	if and only if
	subadditive, superadditive and inner regular.
	Therefore,
	since the $\Gamma(L^p)$-limit of $\{\cG_\eps\}$ must be an increasing set function
	and we work under $p$-growth conditions,
	the integral representation boils down to proving that
	$\Gamma(L^p)\mbox{-}\lim \cG_\eps$ is subadditive, superadditive and inner regular.
	The subadditivity and inner regularity are the most delicate points, and
	they hinge in turn upon the so-called
	\textit{fundamental estimate}
	(see, e.g., \cite[Definition~11.2]{BrDFr}).
	Roughly speaking,
	given $A, B \in \cA(\Omega)$, $y \in L^p(\Omega;\real^3)$
	and sequences $\{y_\eps'\}$ and $\{y_\eps''\}$ converging to $y$ in $L^p$ such that
	$$
		\Gamma(L^p)\mbox{-}\lim \cG_\eps(y,A) = \lim_{\eps\to0} \cG_\eps(y_\eps',A)
		\quad \text{and} \quad
		\Gamma(L^p)\mbox{-}\lim \cG_\eps(y,B) = \lim_{\eps\to0} \cG_\eps(y_\eps'',B),
	$$	
	the fundamental estimate allows to construct a third sequence $\{y_\eps\}$ converging to $y$ in $L^p$ and such that
	$$
		\cG_\eps(y_\eps,A \cup B) \le \cG_\eps\EEE(y_\eps',A) + \cG_\eps(y_\eps'',B) + R_\eps,
		\qquad
		\lim_{\eps\to 0} R_\eps = 0,
	$$
	with $R_\eps$ satisfying a precise estimate in terms of $\|y_\eps'-y_\eps''\|_{L^p(A \cap B)}$.
	The exact form of the fundamental estimate to be used in our analysis is contained in \eqref{FE},
	while its application to establish subadditivity and inner regularity is discussed in Proposition~\ref{sub}.
	
	\subsection{Finsler structure on $\SL(3)$}\label{sec:Finsler}
	To the purpose of devising a form of the fundamental estimate
	that suits the functionals under consideration,
	it is convenient to endow $\SL(3)$ with a metric structure.
	In order to link the latter to the physics of the system we would like to model,
	we follow the approach in \cite{Mie02},
	which is grounded on the concept of \emph{plastic dissipation}.
	
	We recall some basic facts about the geometry of $\SL(3)$,
	which is a smooth manifold
	with respect to the topology induced by the inclusion in $\matr$.
	For every $F \in \SL(3)$ the tangent space at $F$ is characterized as
	$$
	\rmT_F \SL(3)=F{\sf sl}(3) \coloneqq \{FM \in \matr : \tr M = 0\}.
	$$
	In particular, $\rmT_I \SL(3)$ coincides with
	${\sf sl}(3)\coloneqq \{M \in \matr : \tr M = 0\}$.
	We equip $\SL(3)$ with a Finsler structure
	starting from a function $\Delta_I \colon {\sf sl}(3)\to [0,+\infty)$
	on which we make the following assumptions (cf.~\cite[Section~1.1]{BaChSh} and \cite[Section~1]{Mie02}):
	\begin{enumerate}[label=\textbf{D\arabic*:},ref={D\arabic*}]
		\item\label{D0} It is $C^2$ on ${\sf sl}(3)\setminus\{0\}$;
		\item\label{D1} It is positively $1$-homogeneous:
		$\Delta_I(c M) = c\Delta_I(M)$ for all $c > 0$ and $M \in {\sf sl}(3)$;
		\item\label{D3} The function $\Delta_I^2/2$ is strongly convex;
		\item\label{D2} It is $1$-coercive and has at most linear growth:
		there exist $0 < c_4 \le c_5$ such that for all $M \in {\sf sl}(3)$
		$$
		c_4|M| \le \Delta_I(M) \le c_5|M|.
		$$
	\end{enumerate}
	
	We point out that
	we work under stronger regularity assumptions than the ones in \cite{Mie02}.
	This is mainly due to the fact that
	we borrow results developed within the context of differential geometry,
	where smoothness is customarily required.
	As a consequence, some models,
	such as single crystal plasticity,
	are not covered by our analysis;
	on the other hand, our assumptions encompass Von Mises plasticity,
	see the considerations in Example~\ref{ex:VonMises}
	or \cite{HaMiMi,Mie02} for a wider discussion.
	
	We also recall that
	Finsler structures are known to appear as
	homogenized limits of periodic Riemannian metrics \cite{AB}.
	
	Essentially, $\Delta_I$ is a Minkowski norm on ${\sf sl}(3)$,
	which we can ``translate'' to the other tangent spaces by setting
	\begin{equation*}
		\begin{array}{rccc}
			\Delta\colon & \rm T\SL(3) &\to& [0,+\infty)\\[1pt]
			& (F,M) &\mapsto& \Delta_I(F^{-1}M),
		\end{array}
	\end{equation*}
	where $ \rm T\SL(3)$ is the tangent bundle to $\SL(3)$.
	%Note that $\Delta(I,M)=\Delta_I(M)$.
	Then, it can be checked that $(\SL(3),\Delta)$ is a $C^2$ Finsler manifold.
	We refer to the monograph \cite{BaChSh} by {\sc Bao, Chern \& Shen}
	for an introduction to Finsler geometry.
		
	Let now $\mathcal C(F_0,F_1)$ be the family of piecewise $C^2$ curves
	$\Phi\colon [0,1] \to\SL(3)$
	such that $\Phi(0) = F_0$ and $\Phi(1) = F_1$.
	We define a non-symmetric distance on $\SL(3)$ as follows:
	\begin{equation}\label{eq:D}
		D(F_0,F_1) \coloneqq
		\inf\left\lbrace
		\int_0^1 \Delta\big( \Phi(t),\dot{\Phi}(t) \big) \dd t 
		: \Phi \in \mathcal C(F_0,F_1)
		\right\rbrace,
	\end{equation}
	where $\dot{\Phi}$ is the velocity of the curve.
	The function $D$ is positive,
	attains $0$ if and only if it is evaluated on the diagonal of $\SL(3)\times\SL(3)$,
	and fulfills the triangle inequality;
	in general, however, $D(F_0,F_1)\neq D(F_1,F_0)$.
	
	From a physical viewpoint, if $P_0,P_1\colon \Omega \to \SL(3)$ are admissible plastic strains,
	the integral of $D(P_0,P_1)$ over $\Omega$ is interpreted 
	as the minimum amount of energy
	that is dissipated when the system moves from a plastic configuration to another.
	
	Note that under assumptions \ref{D0}--\ref{D3} it follows that $\Delta$ is subadditive (see \cite[Theorem~1.2.2]{BaChSh}), hence convex. Therefore, by	an application of the direct method of the calculus of variations (cf.~\cite[Theorem~5.1]{Mie02})
	it can be proved that for every $F_0,F_1\in \SL(3)$ there exists a curve
	$\Phi \in C^{1,1}([0,1];\SL(3))$ such that $\Phi(0) = F_0$, $\Phi(1) = F_1$ and
	$$
	D(F_0,F_1) = \int_0^1 \Delta\big(\Phi(t),\dot{\Phi}(t)\big) \dd t.
	$$
	We call such $\Phi$ a shortest path between $F_0$ and $F_1$.
	The following result,
	which summarizes the content of
	\cite[Exercise~6.3.3]{BaChSh},
	is crucial for the proof of the fundamental estimate.
	
	\begin{proposition}\label{stm:Bao}
		Assume that \ref{D0}--\ref{D3} hold.
		For any point $F$ in the Finsler manifold $\SL(3)$
		there exists a relatively compact neighborhood $U$ of $F$ such that
		for any $F_0,F_1\in U$ there exists a unique shortest path $\Phi$
		joining $F_0$ and $F_1$, and
		such path depends smoothly on its endpoints $F_0$ and $F_1$.
	\end{proposition}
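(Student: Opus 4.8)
The plan is to reduce the statement to a standard fact about the exponential map in Finsler geometry, namely that geodesics depend smoothly on their initial data and that the exponential map is a local diffeomorphism (being $C^1$ with invertible differential at the origin of each tangent space), and then upgrade this to a statement about shortest paths joining nearby \emph{pairs} of points. Since $(\SL(3),\Delta)$ is a $C^2$ Finsler manifold with $\Delta$ built from the strictly convex, $1$-coercive Minkowski norm $\Delta_I$ (assumptions \ref{D2}--\ref{D3}), the Finsler-geodesic spray is a well-defined $C^1$ vector field on the tangent bundle, so the geodesic flow $(F,v)\mapsto \gamma_{F,v}(t)$ depends $C^1$-smoothly on $(t,F,v)$ by the regularity theory for ODEs; consequently $\exp_F(v) = \gamma_{F,v}(1)$ is $C^1$ in $(F,v)$ on a neighborhood of the zero section, and $d(\exp_F)_0 = \mathrm{id}$ on $\rmT_F\SL(3)$.

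First I would fix $F \in \SL(3)$ and consider the map $E\colon (G,v)\mapsto \big(G,\exp_G(v)\big)$ defined on a neighborhood of $(F,0)$ in the tangent bundle. Its differential at $(F,0)$ is invertible (it is upper-triangular with identity blocks on the diagonal), so by the inverse function theorem $E$ is a $C^1$ diffeomorphism of a neighborhood $\mathcal{V}$ of $(F,0)$ onto an open set in $\SL(3)\times\SL(3)$ containing $(F,F)$. Second, I would pick a relatively compact neighborhood $U$ of $F$ small enough that $U\times U$ lies inside this image and, shrinking further, that the unique preimage $(G,v(F_0,F_1))$ of any $(F_0,F_1)\in U\times U$ satisfies $v(F_0,F_1)$ small; then $t\mapsto \gamma_{F_0,v(F_0,F_1)}(t)$ is \emph{the} geodesic from $F_0$ to $F_1$, it stays in a prescribed small ball by continuity, and it depends smoothly on $(F_0,F_1)$ because $v$ does (being $E^{-1}$ composed with projections) and the flow is smooth in its data.

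Third — and this is where the real content lies — I would argue that this geodesic is actually the \emph{unique shortest path}, not merely a local geodesic. The existence of some shortest path is already granted by the direct-method argument recalled before the statement (cf. \cite[Theorem~5.1]{Mie02}). To see it coincides with the short geodesic just constructed and is unique, I would invoke the standard fact that on a Finsler manifold small enough geodesic balls are strongly convex and that within such a ball the Finsler distance equals $|v|_{\Delta}$ where $v$ is the unique small tangent vector with $\exp$ hitting the target; any competitor curve leaving the ball has length bounded below by the radius and hence is longer, while any competitor staying inside is, after reparametrization, a critical point of the length (equivalently energy) functional and therefore a geodesic, of which there is only one with the given endpoints and small length. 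This convexity-of-small-balls statement is precisely the content of \cite[Exercises~6.3.3]{BaChSh} cited in the statement, so for the write-up I would quote it and concentrate on checking that the $C^2$-regularity of $\Delta$ suffices for the $C^1$ geodesic flow and hence for the smooth dependence; the one point demanding care is that Finsler geodesics are $a\,priori$ only as regular as the spray allows, but $\Delta_I \in C^2$ gives a $C^1$ spray, which is enough for both the inverse-function argument and the uniqueness via the energy functional. The main obstacle, then, is not any single hard estimate but marshalling the Finsler-geometric inputs at the correct (low) regularity; everything else is a routine application of the inverse function theorem and ODE-dependence-on-parameters.
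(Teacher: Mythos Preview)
The paper does not actually prove this proposition: it is stated as a summary of \cite[Exercises~6.3.3]{BaChSh} and used as a black box. Your outline---inverse function theorem applied to $(G,v)\mapsto (G,\exp_G v)$, followed by the Gauss-lemma/short-geodesics-minimize argument for small balls---is precisely the standard argument behind that reference, so at the level of strategy there is nothing to compare.

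There is, however, one concrete gap in your sketch. The claim that ``$\Delta_I\in C^2$ gives a $C^1$ spray'' is not correct for a general Finsler structure: the spray coefficients $G^i=\tfrac14 g^{il}\big([\Delta^2]_{x^k y^l}y^k-[\Delta^2]_{x^l}\big)$ involve the inverse fundamental tensor and mixed second derivatives of $\Delta^2$, so a $C^2$ Finsler function yields only a $C^0$ spray. That is not enough for the inverse-function-theorem step or for smooth dependence on initial data. The results in \cite{BaChSh} are formulated for smooth Finsler structures, and the paper simply imports them without tracking the regularity threshold. If you want a self-contained argument at the $C^2$ level you should exploit that $\Delta$ is \emph{left-invariant} on the Lie group $\SL(3)$: writing $\xi(t)=\Phi(t)^{-1}\dot\Phi(t)\in\mathsf{sl}(3)$, the geodesic equation reduces (Euler--Poincar\'e) to an autonomous ODE on $\mathsf{sl}(3)$ whose right-hand side involves only the Legendre transform of $\tfrac12\Delta_I^2$, which is $C^1$ under your hypotheses, and this is enough for uniqueness and $C^1$ dependence on data. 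Alternatively, assume one more derivative on $\Delta_I$.
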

	
The smooth dependence of shortest paths on their endpoints follows from the fact that each of them is a geodesic, and thus a solution of an ODE.
We recall that a path between $F_0$ and $F_1$ is called a geodesic
if it is a critical point of the length functional
under variations that do not change the endpoints, and that,
given $(F,M)\in T\SL(3)$ such that $\Delta(F,M)$ is sufficiently small,
there exists a unique geodesic $\gamma_{F,M}\colon (-2,2) \to \SL(3)$
satisfying $\gamma_{F,M}(0)=F$ and $\dot\gamma_{F,M}(0)=M$.
Therefore, we have (see \cite[p.\,126]{BaChSh}) that, for $M$ in a neighborhood of $0\in T_F\SL(3)$, we can define the so-called exponential map as
\begin{equation*}
	\exp(F,M) \coloneqq \begin{cases}
	F 							& \text{if  } M=0,\\
	\gamma_{F,M}(1) & \text{otherwise}.
	\end{cases}
\end{equation*}
For each $F \in \SL(3)$,
$\exp(F,\,\cdot\,)$ is a $C^1$-diffeomorphism in a neighborhood of $0 \in F\sl(3)$
(see \cite[Section~5.3]{BaChSh}).
In particular, there $\exp(F,\,\cdot\,)$ is invertible.
Denoting by $\exp_F^{-1}$ its inverse,
if $\Phi$ is the unique shortest path from $F_0$ to $F_1$ (Proposition~\ref{stm:Bao}),
one sees that necessarily 
\begin{equation}\label{eq:shortest-exp}
	\Phi(t)=\exp(F_0,t\exp_{F_0}^{-1}(F_1)), \quad t \in [0,1].
\end{equation}
	
	We conclude this section by noting that, while \ref{D0}--\ref{D3} characterize the Finslerian structure in an intrinsic way, \ref{D2} instead relates the Minkowski norm to the Euclidean one in the ambient space. In general, when such an assumption is made,
	the Finslerian distance 
	between two points of an embedded Finslerian manifold
	is locally comparable to the Euclidean one.
	A lower bound on the Finslerian distance in terms of the Euclidean one
	may be retrieved by relying on the first inequality in \ref{D2} and on the fact that shortest paths in $\real^{3\times3}$ are segments.
	Since such a bound is not needed in the sequel, in the next lemma we present only the upper one.
	The proof is based on the implicit function theorem. For notational convenience, we state it just in the specific case of $(\SL(3),\Delta)$, which is an embedded $8$-dimensional submanifold of $\real^{3\times 3}$.

	\begin{lemma}\label{stm:DvsEucl}
		Let \ref{D0}--\ref{D2} hold, and let $D$ be as in \eqref{eq:D}.
		For every $F\in \SL(3)$
		there exist a bounded open neighborhood $U$ of $F$ and
		a constant $c>0$ such that
		$$
		D(F_0,F_1) \le c\,|F_1-F_0|
		$$
		for any $F_0,F_1 \in U$.
	\end{lemma}

\begin{proof}
	Observe that, by definition of $D$ and $\Delta$
	and by employing the upper bound in \ref{D2},
	we have
	\begin{align}
	\nonumber		D(F_0,F_1) &\leq \int_0^1 \Delta\big( \Phi(t),\dot{\Phi}(t) \big) \dd t \\
	\nonumber							& = \int_0^1 \Delta_I\big( \Phi(t)^{-1}\dot{\Phi}(t) \big) \dd t \\
	\label{eq:DleqEu}				& \leq c_5 \int_0^1 \big| \Phi(t)^{-1}\dot{\Phi}(t) \big| \dd t 
	\end{align}
	for any piecewise $C^2$ curve joining $F_0$ and $F_1$.
	In order to prove the desired estimate,
	we need to choose $\Phi$ suitably.
	
	Since $\SL(3)$ is a submanifold of $\real^{3\times 3}$,
	we can locally represent it as a graph
	defined on the affine tangent space at a given point.
	To be precise, for any $F\in \SL(3)$,
	let $\pi_F$ be the affine tangent hyperplane to $\SL(3)$ at $F$,
	$\hat{n}_F\in \real^{3\times 3}$ be a unit normal to $\SL(3)$ at $F$, and 
	$B_r(F)\subset \real^{3\times 3}$ be the Euclidean ball
	centered at $F$ with radius $r>0$.
	Then, there exist
	a number $\delta>0$, a bounded, open neighborhood $U$ of $F$
	and a smooth function $\psi\colon \pi_F \cap \overline{B_\delta(F)}\to \real$
	such that
	$$
	U = \left\{ G\in \real^{3\times 3} : G=F+M+\psi(M)\hat{n}_F,\ M \in \pi_F \cap B_\delta(F)\right\}.
	$$

	Now, given $F\in \SL(3)$ and the
	associated $\delta$, $U$ and $\psi$,
	consider $F_0,F_1\in U$ and the matrices $M_0,M_1 \in \pi_F \cap B_\delta(F)$
	such that
	\begin{equation}\label{eq:Fi}
	F_i = F+M_i+\psi(M_i)\hat{n}_F \quad \mbox{for } i=0,1.
	\end{equation}
	Since $\pi_F \cap B_\delta(F)$ is convex, the curve
	$$
	\overline\Phi(t)\coloneqq F + M_0 + t(M_1-M_0)+ \psi \Big( M_0 + t(M_1-M_0) \Big)\hat{n}_F,
	\quad t\in [0,1]
	$$
	is a path in $U$ joining $F_0$ and $F_1$.
	We now choose $\Phi=\overline \Phi$ in \eqref{eq:DleqEu}.
	First, we observe that,
	since $U$ is bounded and $\overline \Phi(t)\in\SL(3)$ for all $t$,
	there exist a $U$-dependent constant $c>0$
	such that $|\overline\Phi^{-1}(t)|\leq c$ for all $t$
	(cf.~\eqref{Pbound}).
	Therefore, from \eqref{eq:DleqEu} we infer
	\begin{align*}
	D(F_0,F_1) &\leq c \int_0^1 \big| \dot{\overline\Phi}(t) \big| \dd t \\
	& \leq c \int_0^1 \Big| (M_1 - M_0)\Big(I+\nabla \psi\big( M_0 + t(M_1-M_0) \big)\Big)  \Big| \dd t \\
	& \leq c\, | M_1 - M_0 |,
	\end{align*}
	where we used that $\nabla\psi$ is continuous on $\pi_F \cap \overline{B_\delta(F)}$. To conclude, it suffices to observe that, by \eqref{eq:Fi}, $| M_1 - M_0 | \leq c\, | F_1 - F_0 |$.
\end{proof}

\begin{remark}\label{stm:K-in-H2}
	Grounding on Proposition~\ref{stm:Bao} and Lemma~\ref{stm:DvsEucl}, we can show that
	there exists a compact $K$ meeting the requirements in \ref{H1}.
	Let $U$ be a relatively compact neighborhood of $I\in\SL(3)$
	such that for any $F_0,F_1\in U$ there is a unique shortest path $\Phi$
	joining $F_0$ and $F_1$. Up to restricting to a subset, we can suppose that $U$ is such that Lemma~\ref{stm:DvsEucl} holds.
	By a Finsler variant of a theorem by Whitehead \cite[Exercise~6.4.3]{BaChSh},
	there exists an open neighborhood $V$ of $I$
	that is compactly contained in $U$ and
	geodesically convex.
	Since $K\coloneqq \bar V\subset U$,
	there is a unique shortest path $\Phi$
	from $F_0$ to $F_1$ for any  $F_0,F_1\in K$.
	By \eqref{eq:shortest-exp}, the path depends smoothly on its endpoints.
	The fact that $K$ is geodesically convex as well follows then by the same argument that proves that the closure of a convex set is still convex.
\end{remark}

	\begin{example}[Von Mises plasticity]\label{ex:VonMises}
		Von Mises plasticity is a model for polycrystals, such as metals,
		where the presence of grains averages out the different crystallographic directions.
		As detailed in \cite[Section~4.1]{Mie02},
		in such model the Minkowski norm $\Delta_I$ is given
		by a scalar product on the tangent space $\sl(3)$.
		This is equivalent to assuming that $\SL(3)$ is equipped with a left-invariant Riemannian metric. Exploiting the properties of $\sl(3)$, $\Delta_I$ can be put into the general form (see \cite[Formula~(5)]{HaMiMi} or \cite[Formula~(4.3)]{Mie02})
		$$
		\Delta_I(M) = \left(\beta_1|M_{\rm sym}|^2 + \beta_2|M_{\rm anti}|^2\right)^{1/2}\ \mbox{with } M_{\rm sym} = \frac{M + M^T}{2}, \ M_{\rm anti} = \frac{M - M^T}{2},
		$$
		for some positive material parameters $\beta_1$ and $\beta_2$.
		Here $|\,\cdot\,|$ denotes the matrix Frobenius norm, that is,
		the (element-wise) Euclidean norm $|\,\cdot\,|$ of $\real^{3\times 3}$.
		Note that $\Delta_I$ does satisfy assumptions \ref{D0}--\ref{D2}. By standard results in Riemannian geometry (see, e.g., \cite[Chapter~3, Proposition~4.2]{DoC}), for any matrix in $\sl(3)$ there exists a number $r > 0$ such that the closure of the geodesic ball of radius $r$ is geodesically convex. We recall that geodesic balls are defined as the images through the exponential map of balls centered at the origin on the tangent space, provided the exponential map is there a diffeomorphism (see \cite[p.\,70]{DoC}). In conclusion, for the case of Von Mises plasticity, the set $K$ in \ref{H1} can be chosen as the closure of a suitable geodesic ball centered at the identity.
	\end{example}
	
%%%%%%%%%%%%%%%%%%%%%%%%%%%%%%%%%%%%%%%%%%%%%%%%%%%%%%%%%%%%%%%%%%%%%%%%%%%%%%%%%%%%%%%%%%%%%%%%%%%%%%%%%%%%%%%%%%%%%%%%%%%%%%%%%%%%%%%%%%%%%%%	
	
	\section{Homogenization in finite plasticity}\label{sec:hom-fin-plast}
	We devote this section to the proof of Theorem~\ref{stm:homo-fin-plast},
	that is, we exhibit the $\Gamma(\tau)$-limit of the functionals in \eqref{Feps}.
	We recall that the topology $\tau$ was introduced in \eqref{eq:tau}.
	
	\begin{remark}\label{stm:conv-inv}
		The energy functionals at stake depend on the plastic strain $P$ through its inverse $P^{-1}$.
		In this respect, it is useful to notice that
		if $P_k \to P$ uniformly,
		then $P_k^{-1} \to P^{-1}$ uniformly as well.
		Indeed, recalling that for any $k\in\nat$ we can write
		\begin{equation*}
			P_k^{-1} = \frac{\left({\rm cof}P_k\right)^T}{\det P_k} = \left({\rm cof}P_k\right)^T,
		\end{equation*}
		we deduce convergence for the sequence of inverses.	
	\end{remark}
	
	Since $(W^{1,2}(\Omega;\real^3)\times W^{1,q}(\Omega;\SL(3)),\tau)$ is a separable metric space,
	we know from general properties of $\Gamma$-convergence that,
	up to extraction of subsequences,
	the functionals in \eqref{Feps} $\Gamma(\tau)$-converge.
	Our main task is therefore to show that the limit is an integral functional,
	and we achieve this by a localization approach
	as the one streamlined in Subsection~\ref{sec:Gammaconv}.
	Precisely, denoting by $\cA(\Omega)$ the family of open subset of $\Omega$,
	with a slight abuse of notation,
	for any triple $(y,P,A)\in L^2(A;\real^3)\times L^q(A;\SL(3)) \times \cA(\Omega)$
	we set 
	\begin{equation}\label{FepsA}
		\cF_k(y,P,A) \coloneqq
		\left\lbrace
		\begin{aligned}
			\displaystyle \ia W\left(\frac{x}{\eps_k},\ny P^{-1}\right) \dd x &+
			\displaystyle \ia H\left(\frac{x}{\eps_k},P\right) \dd x +  \ia |\nabla P|^q \dd x \\[6pt]
			& \text{if } (y,P) \in W^{1,2}(A;\real^3)\times W^{1,q}(A;K), \\[3pt]
			+\infty \quad\qquad\qquad\qquad\qquad&\text{otherwise in } L^2(A;\real^3)\times L^q(A;\SL(3)),
		\end{aligned}
		\right.
	\end{equation}
	where ${\eps_k}$ is an infinitesimal sequence.
	
	We first show \EEE that
	the limits of $\Gamma$-convergent subsequences are in turn integral functionals.
	Second, we characterize the limiting energy densities,
	proving as well that 
	the whole family $\{\cF_\eps\}$ $\Gamma$-converges.
	
	\subsection{Integral representation}
	In this subsection we establish the following:
	
	\begin{theorem}\label{stm:homo-fin-plast-bis}
		Let $\cF_k$ be as in \eqref{FepsA}, where $W$ and $H$ satisfy \ref{E1}--\ref{E-lip} and \ref{H0}--\ref{H2}, respectively.
		Then, up to subsequences, $\{\cF_k\}$ $\Gamma(\tau)$-converges and
		for all $y \in L^2(A;\real^3)$ and $P \in L^q(A;\SL(3))$
		\begin{equation}\label{Fint}
			\Gamma(\tau)\mbox{-}\lim_{k\to+\infty} \cF_k(y,P,A) = 
			\begin{cases}
				\displaystyle \ia f(x) \dd x & %+ \io H(P(x)) \dd x &
				\text{if } (y,P) \in W^{1,2}(A;\real^3)\times W^{1,q}(A;K), \\[6pt]
				+\infty &\text{otherwise in } L^2(A;\real^3)\times L^q(A;\SL(3)),
			\end{cases}
		\end{equation}	
		for some $f \in L^1_{\rm loc}(\real^3)$
		(which depends on the subsequence).
	\end{theorem}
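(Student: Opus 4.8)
The plan is to carry out the localization scheme outlined in Subsection~\ref{sec:Gammaconv}: a $\Gamma$-compactness argument produces, along a subsequence, an abstract $\Gamma(\tau)$-limit, and the De Giorgi--Letta criterion is then invoked to show that this limit is of integral type. Concretely, I would first fix a countable family $\cR\subset\cA(\Omega)$ of open sets that is rich enough — say the finite unions of open cubes with rational vertices compactly contained in $\Omega$, so that every pair $A'\Subset A$ of open sets can be separated as $A'\Subset R\Subset A$ with $R\in\cR$. Since the relevant space, endowed with $\tau$, is separable and metrizable (on each such $R$ as well), the $\Gamma$-compactness of $\Gamma$-convergence and a diagonal argument over $\cR$ provide a subsequence — which I do not relabel — along which
\[
\cF_\infty(y,P,R)\coloneqq\Gamma(\tau)\text{-}\lim_{k\to+\infty}\cF_k(y,P,R)
\]
exists for all $R\in\cR$ and all $(y,P)$. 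I would then extend $\cF_\infty$ to arbitrary open sets by its inner-regular envelope, $\cF_\infty(y,P,A)\coloneqq\sup\{\cF_\infty(y,P,R):R\in\cR,\ R\Subset A\}$, so that the task reduces to showing that, for each fixed admissible $(y,P)$, the set function $A\mapsto\cF_\infty(y,P,A)$ is the trace on $\cA(\Omega)$ of a Borel measure absolutely continuous with respect to $\mathcal L^3$, and that it coincides with the genuine $\Gamma(\tau)$-limit of $\{\cF_k(y,P,\cdot)\}$.

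Next I would collect the elementary ingredients. Testing the $\limsup$ inequality with the constant sequence $(y_k,P_k)\equiv(y,P)$ and using the quadratic growth \ref{E-growth}, the uniform bound \eqref{Pbound}, and the boundedness of $H(x,\cdot)$ on the compact set $K$ granted by \ref{H0}--\ref{H2}, one gets
\[
\cF_\infty(y,P,A)\le C\int_A\big(1+|\ny|^2+|\nabla P|^q\big)\dd x
\]
whenever $(y,P)\in W^{1,2}(A;\real^3)\times W^{1,q}(A;K)$; conversely, the coercivity constants $c_1,c_4$ together with \eqref{stm:Pbound} yield the matching lower bound $\cF_\infty(y,P,A)\ge c\int_A(|\ny|^2+|\nabla P|^q)\dd x$. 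In particular a finite value of the $\Gamma(\tau)$-liminf forces $y\in W^{1,2}(A;\real^3)$ and, using the closedness of $K$, $P\in W^{1,q}(A;K)$, so the $\Gamma$-limit is $+\infty$ off that class. Finally, $\cF_\infty(y,P,\cdot)$ is plainly increasing, and it is superadditive on disjoint open sets, because the restrictions to $A$ and to $B$ of a recovery sequence for $A\cup B$ are admissible competitors there and the three integrals in \eqref{FepsA} split over disjoint domains.

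The core of the argument is subadditivity and inner regularity, both of which descend from the fundamental estimate \eqref{FE}. Given $A'\Subset A$, $B\in\cA(\Omega)$, and $\tau$-converging sequences $(y_k',P_k')\to(y,P)$ and $(y_k'',P_k'')\to(y,P)$ that are almost optimal for $\cF_k(\cdot,\cdot,A)$ and $\cF_k(\cdot,\cdot,B)$, I would splice them across a thin transition layer $S\subset(A\cap B)\setminus\overline{A'}$. For the deformation I would take the classical cut-off $y_k\coloneqq\varphi y_k'+(1-\varphi)y_k''$: the gradient carries the spurious term $\nabla\varphi\,(y_k'-y_k'')$, which the $2$-Lipschitz bound \ref{E-lip} converts into an error controlled by $\|y_k'-y_k''\|_{L^2(A\cap B)}\to0$. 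The delicate point — the very reason for introducing the Finsler structure — is the plastic component, since a convex combination of $P_k'$ and $P_k''$ need not lie in $\SL(3)$, let alone in $K$. Instead, I would glue along Finsler geodesics: by Remark~\ref{stm:K-in-H2} the set $K$ is geodesically convex and, by Proposition~\ref{stm:Bao}, the unique shortest path between two points of $K$ depends smoothly on its endpoints, so, recalling \eqref{eq:shortest-exp},
\[
P_k(x)\coloneqq\exp\!\big(P_k'(x),\,\varphi(x)\,\exp_{P_k'(x)}^{-1}(P_k''(x))\big)
\]
defines a $W^{1,q}$ map taking values in $K$, equal to $P_k'$ where $\varphi=1$ and to $P_k''$ where $\varphi=0$. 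Smooth dependence on the endpoints gives a pointwise estimate $|\nabla P_k|\le C\big(|\nabla P_k'|+|\nabla P_k''|+|\nabla\varphi|\,D(P_k',P_k'')\big)$ on $S$, and since $D(P_k',P_k'')\le C|P_k'-P_k''|\to0$ uniformly, the contribution of the last summand is infinitesimal in $L^\infty$, hence in $L^q$; the hardening term is absorbed via \ref{H2} and $\|P_k-P_k'\|_\infty\to0$. The remaining contributions $\int_S(|\nabla y_k'|^2+|\nabla y_k''|^2+|\nabla P_k'|^q+|\nabla P_k''|^q)$ over the transition layer I would render arbitrarily small by the usual averaging trick: work with $N$ nested disjoint layers and, by a pigeonhole argument, select the one on which these integrals amount to at most $\tfrac1N$ of the (equibounded) total energy. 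This establishes \eqref{FE}.

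Finally, with \eqref{FE} in hand, the standard machinery recalled in Proposition~\ref{sub} yields the subadditivity and inner regularity of $\cF_\infty(y,P,\cdot)$, together with its coincidence with the $\Gamma(\tau)$-limit of $\{\cF_k(y,P,\cdot)\}$ along the selected subsequence — in particular the latter exists. The De Giorgi--Letta criterion then exhibits $\cF_\infty(y,P,\cdot)$ as the restriction to $\cA(\Omega)$ of a Borel measure, which by the upper bound above is absolutely continuous with respect to $\mathcal L^3$, and the Radon--Nikodym theorem provides the density $f=f_{(y,P)}\in L^1_{\mathrm{loc}}(\real^3)$ with $\Gamma(\tau)\text{-}\lim_k\cF_k(y,P,A)=\int_A f\dd x$, as claimed; the dependence of $f$ on the subsequence is removed only afterwards, when $f$ is computed by a perturbative argument based on the classical homogenization formula of Theorem~\ref{stm:hom-classic}. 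I expect the plastic part of the fundamental estimate — reconciling the geodesic gluing both with the hard constraint $P_k(x)\in K$ and with the superlinear regularization $\int|\nabla P|^q$ — to be the genuine obstacle.
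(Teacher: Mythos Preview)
Your proposal is correct and follows essentially the same route as the paper: the fundamental estimate is obtained by geodesic gluing of the plastic strains in $K$ together with the pigeonhole layer-selection, and then the De~Giorgi--Letta criterion plus Radon--Nikodym yield the integral representation. One small slip to fix: the explicit formula $P_k=\exp\!\big(P_k',\,\varphi\,\exp_{P_k'}^{-1}(P_k'')\big)$ gives $P_k''$ at $\varphi=1$ and $P_k'$ at $\varphi=0$, the opposite of what you claim and of what matches your choice $y_k=\varphi y_k'+(1-\varphi)y_k''$; replacing $\varphi$ by $1-\varphi$ (equivalently, swapping the roles of $P_k'$ and $P_k''$, as in the paper's $\gamma\circ(\varphi,P_k'',P_k')$) restores consistency.
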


	As a first step, we introduce a version of the fundamental estimate
	fit for the functionals in \eqref{FepsA}.
	For our purposes (cf.~Proposition~\ref{almsub}), it suffices to formulate it for functions in $W^{1,2}(A;\real^3) \times W^{1,q}(A;K)$, which is the effective domain of the functionals $\cF_k(\,\cdot\,,\,\cdot\,,A)$ in \eqref{FepsA} for $A \in \cA(\Omega)$.
	We recall that,
	given $A,A' \in \cA(\Omega)$ with $A' \Subset A$
	(i.e., $\overline{A'}$ is a compact set contained in $A$),
	we say that a function $\varphi$ is a {\em cut-off function}
	between $A'$ and $A$ if $\varphi \in C^\infty_0(A)$,
	$0 \le \varphi \le 1$ and $\varphi \equiv 1$ in a neighborhood of $\overline{A'}$.
	
	\begin{definition}
		Let $\cC$ be a class of functionals $\cF\colon W^{1,2}(\Omega;\real^3) \times W^{1,q}(\Omega;K) \times \cA(\Omega) \to [0,+\infty]$. We say that $\cC$ satisfies uniformly the {\em fundamental estimate}
		if for every $A,A',B \in \cA(\Omega)$ with $A' \Subset A$ and for every $\sigma > 0$ there exists a constant $M_\sigma > 0$ with the following property: for all $\cF \in \cC$ and for every $(y_1,P_1),(y_2,P_2) \in W^{1,2}(\Omega;\real^3) \times W^{1,q}(\Omega;K)$
		there exist a cut-off function $\varphi$ between $A'$ and $A$
		and a path $\gamma\colon [0,1] \times K \times K \to K$ satisfying $\gamma(0,F,G)=F$, $\gamma(1,F,G)=G$ for all $F,G \in K$ such that
		\begin{equation}\label{FE}
			\begin{aligned}
				&\cF\big(\varphi y_1+(1-\varphi)y_2,\gamma\circ(\varphi,P_2,P_1),A' \cup B \big) \\[2 mm]
				&\le (1+\sigma)\left(\cF(y_1,P_1,A) + \cF(y_2,P_2,B)\right) \\[1 mm]
				& \quad + M_\sigma \int_{(A\cap B)\setminus A'} \left(|y_1-y_2|^2+|P_1-P_2|^q\right)\dd x + \sigma.
			\end{aligned}
		\end{equation}
	\end{definition}

	The use of the path $\gamma$ in \eqref{FE} is motivated
	by the simple observation that
	the ``convex combination'' $\varphi P_1+(1-\varphi)P_2$ does not belong
	to $\SL(3)$ in general.
	For example, if we let
	$$P_1 = \left(
	\begin{array}{ccc}
	1 & 0 & 0 \\
	0 & 1 & 0 \\
	0 & 0 & 1
	\end{array}
	\right), \quad P_2 = \left(
	\begin{array}{ccc}
	-1 & 0 & 0 \\
	0 & -1 & 0 \\
	0 & 0 & 1
	\end{array}
	\right),$$
	we see that
	$\det P_1 = \det P_2 = 1$, but $\det(P_1/2 + P_2/2) = 0$.
	However, not any path $\gamma$ is appropriate for our purposes,
	because in general it may not be completely contained in $K$, that is, in the domain of $H$. Moreover, the regularization term $\nabla P$ calls for some regularity of $\gamma$ with respect to its endpoints (see \eqref{eq:nablagamma}) and for a specific bound on the velocity $\dot\gamma$ (see \eqref{eq:gammadot}). To tackle such issues, in Proposition~\ref{stm:fundest} we select $\gamma$ suitably.
	
	The next statement grants that
	the class $\cC=\{\cF_k\}$ meets the definition above.
		
	\begin{proposition}\label{stm:fundest}
		The sequence of functionals
		$\{\cF_k\}_{k\in\nat}$ defined in \eqref{FepsA}
		satisfies uniformly the fundamental estimate \eqref{FE},
		upon choosing $\gamma$ as the map
		that associates	to $(t,F,G) \in [0,1] \times K \times K$
		the image at $t$ of the unique shortest path connecting $F$ and $G$.
	\end{proposition}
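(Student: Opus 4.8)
The plan is to verify the three contributions to $\cF_k$ separately, since the fundamental estimate \eqref{FE} is additive in the elastic, hardening, and gradient terms. The key is to choose the cut-off function $\varphi$ carefully — as in the classical proof — and then to take $\gamma$ as prescribed in the statement, namely $\gamma(t,F,G)$ being the evaluation at time $t$ of the unique shortest path from $F$ to $G$ (which exists and depends smoothly on $(F,G)\in K\times K$ by Proposition~\ref{stm:Bao}, Remark~\ref{stm:K-in-H2}, and formula \eqref{eq:shortest-exp}). Set $\hat y \coloneqq \varphi y_1 + (1-\varphi)y_2$ and $\hat P \coloneqq \gamma\circ(\varphi,P_2,P_1)$, so that $\hat P = P_1$ where $\varphi = 1$ and $\hat P = P_2$ where $\varphi = 0$, and $\hat P$ ranges in $K$ everywhere because each shortest path stays in the geodesically convex set $K$. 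Note $\hat y$ agrees with $y_1$ on a neighborhood of $\overline{A'}$ and with $y_2$ outside $A$; the same holds for $\hat P$. Thus on $(A'\cup B)\setminus(A\cap B)$ the pair $(\hat y,\hat P)$ coincides with one of the two given pairs, and only the overlap region $(A\cap B)\setminus A'$, which is where $\nabla\varphi$ is supported, requires genuine estimation.

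For the elastic term I would proceed exactly as in the classical fundamental estimate (cf. \cite[Chapter~11]{BrDFr}): using \ref{E-growth} and \ref{E-lip}, on the overlap region one bounds $W(x/\eps_k, \nabla\hat y\,\hat P^{-1})$ by splitting $\nabla\hat y = \varphi\nabla y_1 + (1-\varphi)\nabla y_2 + \nabla\varphi\otimes(y_1-y_2)$ and $\hat P^{-1}$ correspondingly, controlling the error by $c_3(1+|\cdot|+|\cdot|)|\nabla\varphi||y_1-y_2|$ plus a term measuring how far $\hat P^{-1}$ is from $P_1^{-1}$ or $P_2^{-1}$; since $\hat P$ lies in $K$, the bound \eqref{Pbound}–\eqref{stm:Pbound} keeps all matrix inverses uniformly bounded, and smoothness of $\gamma$ in its endpoints gives $|\hat P - P_i| \le C|P_1 - P_2|$ on the overlap. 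Choosing $\varphi$ among finitely many cut-offs with $|\nabla\varphi|$ controlled by $\mathrm{dist}(A',\partial A)$ and using a standard averaging/pigeonhole argument over the $2N$ layers between $A'$ and $A$ converts the $L^2$-bound on $\nabla\varphi\cdot(y_1-y_2)$ into the $(1+\sigma)$-multiplicative constant plus the $M_\sigma\int_{(A\cap B)\setminus A'}(|y_1-y_2|^2 + |P_1-P_2|^q)$ remainder and the additive $\sigma$. For the hardening term, by \ref{H1} it is finite since $\hat P\in K$, and by \ref{H2} the Lipschitz bound on $H(x,\cdot)|_K$ gives $|H(x/\eps_k,\hat P) - H(x/\eps_k,P_i)| \le L|\hat P - P_i| \le CL|P_1 - P_2|$, which is absorbed into the remainder over the overlap region and contributes $0$ elsewhere, so this term adds at most $\cF(y_1,P_1,A) + \cF(y_2,P_2,B)$ plus the $M_\sigma$-remainder.

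The term that needs the most care is the gradient regularization $\int|\nabla\hat P|^q$. Here $\nabla\hat P = \partial_t\gamma(\varphi,P_2,P_1)\nabla\varphi + D_F\gamma(\varphi,P_2,P_1)\nabla P_2 + D_G\gamma(\varphi,P_2,P_1)\nabla P_1$; the first summand is new compared to the scalar case. Smoothness of $\gamma$ in $(F,G)$ over the compact $K\times K$ (the referenced \eqref{eq:nablagamma}) bounds $|D_F\gamma|,|D_G\gamma|$ by a universal constant, so those two summands are controlled by $C(|\nabla P_1|^q + |\nabla P_2|^q)$, integrated over the respective regions. The velocity term $\partial_t\gamma = \dot\Phi$ is bounded because $\Delta(\Phi(t),\dot\Phi(t))$ is constant in $t$ for a shortest path and equals $D(F,G)\le \mathrm{diam}(K)$, which together with the $1$-coercivity \ref{D2} of $\Delta_I$ and the uniform bound \eqref{Pbound} yields $|\dot\Phi(t)| \le C$ on $K\times K$ (the referenced \eqref{eq:gammadot}); hence $|\partial_t\gamma\,\nabla\varphi|^q \le C|\nabla\varphi|^q|P_1-P_2|^q$ — using that $\dot\Phi$ also vanishes to first order when $F = G$, or more simply that $D(F,G)\le C|F-G|$ for $F,G\in K$ by local equivalence of $D$ with the Euclidean distance. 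The main obstacle is precisely assembling these estimates so that the $|\nabla\varphi|$-terms land in the remainder with the prescribed weight: this again forces the averaging over finitely many nested cut-offs, now applied simultaneously to the elastic and gradient contributions, so that a single choice of $\varphi$ works for all three terms. Once the layered-cut-off bookkeeping is fixed and $N$ is chosen large depending on $\sigma$ (and the data $c_i$, $c_K$, $L$, and the $C^2$-norm of $\gamma$), inequality \eqref{FE} follows with $M_\sigma$ independent of $k$ and of the particular pairs $(y_i,P_i)$, which is exactly the uniform fundamental estimate claimed.
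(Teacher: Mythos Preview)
Your proposal is correct and follows essentially the same route as the paper: layered cut-offs $\varphi_j$ between $A'$ and $A$ with $|\nabla\varphi_j|\le 2N/\delta$, the shortest-path interpolation $\gamma$ for the plastic variable (kept inside $K$ by geodesic convexity), separate estimation of the elastic, hardening, and gradient contributions on the transition layer, the chain-rule decomposition \eqref{eq:nablagamma} together with the velocity bound \eqref{eq:gammadot}, and a pigeonhole choice of layer to turn the overlap terms into the $(1+\sigma)$-factor plus the $M_\sigma$-remainder.

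Two small remarks. First, for the elastic term the paper does \emph{not} use the Lipschitz condition \ref{E-lip} at all: on the overlap layer it simply applies the quadratic upper growth \ref{E-growth} to $W(x/\eps_k,\nabla\hat y\,\hat P^{-1})$ together with $|\hat P^{-1}|\le c_K$, obtaining directly $c(1+|\nabla y_1|^2+|\nabla y_2|^2+(2N/\delta)^2|y_1-y_2|^2)$. Your Lipschitz-based splitting is not wrong, but it is a detour --- after the comparison you still need the growth bound to handle the remaining piece, and the intermediate $|\hat P-P_i|$ term is in fact never needed for the elastic part. Second, for the hardening the paper uses only that $H(x,\cdot)$ is bounded on $K$ (a consequence of \ref{H2} on a compact), giving the cruder bound $c\,\mathcal L^3(B\cap D_j)$ rather than your Lipschitz estimate in $|P_1-P_2|$; either works. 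Everything else matches.
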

	
	\begin{proof}
		Fix $A,A',B \in \cA(\Omega)$ with $A' \Subset A$ and $\sigma > 0$. Fix also $(y_1,P_1),(y_2,P_2) \in W^{1,2}(\Omega;\real^3) \times W^{1,q}(\Omega;K)$.
		We need to choose suitably the cut-off function $\varphi$.
		
		Let $\delta \coloneqq {\rm dist}(A',\partial A)>0$.
		For a fixed $N \in \nat$, $N > 1$, we define
		\begin{align*}
			C_0 &\coloneqq A', \\
			C_j &\coloneqq \left\lbrace x \in A : {\rm dist}(x,A') < \frac{j}{N}\delta \right\rbrace, \quad j = 1,\dots,N, \\
			D_j &\coloneqq C_j \setminus C_{j-1}.
		\end{align*}
		Observe that for all $j$
		we can construct a cut-off function $\varphi_j$
		between $C_{j-1}$ and $C_j$ such that $|\nabla\varphi_j| \le 2N/\delta$.
		Now, setting \EEE $\gamma^{-1}(t,F,G)\coloneqq (\gamma(t,F,G))^{-1}$ for all $t\in[0,1]$,
		\begin{equation}\label{p4.1}
		\cF_k\left(\varphi_j y_1+(1-\varphi_j)y_2,\gamma\circ(\varphi_j,P_2,P_1),A' \cup B\right)
		= I_{\rm{el}}+I_{\hard}+I_{\rm{reg}},
		\end{equation}
		where
		\begin{gather*}
			I_{\rm el} \coloneqq \int_{A' \cup B}  
					W\left(\frac{x}{\eps_k},
						\big[\varphi_j \ny_1+(1-\varphi_j)\ny_2+\nabla\varphi_j\otimes(y_1-y_2)\big]
						\big(\gamma^{-1}\circ(\varphi_j,P_2,P_1)\big)
						\right) \! \dd x, \\
			I_\hard \coloneqq
				\int_{A' \cup B} 
					H\left(\frac{x}{\eps_k},\gamma\circ(\varphi_j,P_2,P_1)\right) \! \dd x, \\
			I_{\rm reg} \coloneqq \int_{A' \cup B}
				\left|\nabla \big(\gamma\circ(\varphi_j,P_2,P_1)\big)\right|^q \! \dd x.
		\end{gather*}
		Since $P_1,P_2 \in K$, \eqref{Pbound} holds.
		We now choose $\gamma$  as the map such that $\gamma(t,F,G)$ is the evaluation at $t$ of the unique shortest path connecting $F$ and $G$ for all $(t,F,G) \in [0,1] \times K \times K$.
		By Proposition~\ref{stm:Bao} we know that such $\gamma$ exists,
		that it lies completely in $K$ if $F,G\in K$
		(see \ref{H1}),
		and that it depends smoothly on its arguments.
		
		Let us estimate the three summands above separately.
		For the elastic contribution we have
		\begin{align*}
			I_\el
			&= \int_{(A' \cup B) \cap C_{j-1}} W\left(\frac{x}{\eps_k},\ny_1 P_1^{-1}\right) \dd x + \int_{(A' \cup B) \setminus C_j}  W\left(\frac{x}{\eps_k},\ny_2 P_2^{-1}\right) \dd x \nonumber\\
			& \quad + \int_{(A' \cup B) \cap D_j}\!\!  W\left(\frac{x}{\eps_k},\big[\varphi_j \ny_1+(1-\varphi_j)\ny_2+\nabla\varphi_j\otimes(y_1-y_2)\big]\big(\gamma^{-1}\circ(\varphi_j,P_2,P_1)\big)\right)\! \dd x \nonumber\\[2 mm]
			&\le \ia   W\left(\frac{x}{\eps_k},\ny_1 P_1^{-1}\right) \dd x + \int_B   W\left(\frac{x}{\eps_k},\ny_2 P_2^{-1}\right) \dd x \nonumber\\
			& \quad + c \int_{B \cap D_j}  \left(1 + |\ny_1|^2 + |\ny_2|^2 + \left(\frac{2N}{\delta}\right)^2|y_1-y_2|^2\right) \dd x,
		\end{align*}
		where we used the fact that $(A' \cup B) \cap C_{j-1} \subseteq C_{j-1} \subseteq A$, $(A' \cup B) \setminus C_j \subseteq B$, $(A' \cup B) \cap D_j = B \cap D_j$, the growth condition \ref{E-growth} and the uniform bound \eqref{Pbound}. 
		Analogously, for the second summand we find
		\begin{align*}
			I_\hard
			&= \int_{(A' \cup B) \cap C_{j-1}}   H\left(\frac{x}{\eps_k},P_1\right) \dd x + \int_{(A' \cup B) \setminus C_j}  H\left(\frac{x}{\eps_k},P_2\right) \dd x \nonumber\\
			& \quad + \int_{(A' \cup B) \cap D_j}   H\left(\frac{x}{\eps_k},\gamma\circ(\varphi_j,P_2,P_1)\right) \dd x \nonumber\\
			&\le \ia   H\left(\frac{x}{\eps_k},P_1\right) \dd x + \int_B   H\left(\frac{x}{\eps_k},P_2\right) \dd x + c\,\mathcal{L}^3(B \cap D_j),
		\end{align*}
		where we exploited again the geodesic convexity of $K$ and
		the fact that $H$ is bounded on $K$, since it is Lipschitz.
		
		We now estimate $I_{\rm reg}$.	
		The chain rule yields
		\begin{multline}\label{eq:nablagamma}
			\nabla \big(\gamma\circ(\varphi_j,P_2,P_1)\big)
			\\ = \big[ \dot\gamma \circ(\varphi_j,P_2,P_1) \big] \otimes \nabla \varphi_j +
			\big[ \partial_F \gamma \circ(\varphi_j,P_2,P_1) \big] \nabla P_2
			+  \big[ \partial_G \gamma \circ(\varphi_j,P_2,P_1) \big] \nabla P_1,
		\end{multline}
		whence, observing that by Proposition~\ref{stm:Bao} the two differentials \EEE
		$\partial_F \gamma$ and $\partial_G \gamma$ are continuous functions
		restricted to compact sets,
		\begin{align*}
			I_{\rm reg}
			&= \int_{(A' \cup B) \cap C_{j-1}} |\nP_1|^q \dd x + \int_{(A' \cup B) \setminus C_j} |\nP_2|^q \dd x
			+ \int_{(A' \cup B) \cap D_j} \left|\nabla \big(\gamma\circ(\varphi_j,P_2,P_1)\big) \right|^q \dd x \\
			&\le \ia |\nP_1|^q \dd x + \int_B |\nP_2|^q \dd x \\
			& \quad +
			\left(\frac{2N}{\delta}\right)^q \int_{B\cap D_j} | \dot\gamma \circ(\varphi_j,P_2,P_1) |^q \dd x + c \int_{B \cap D_j} \big(|\nP_1|^q + |\nP_2|^q\big) \dd x.
		\end{align*}
		We now resort to the explicit expression of $\gamma$ in terms of the exponential map, see \eqref{eq:shortest-exp}.
		From \eqref{Pbound} and recalling that by definition $\exp_{F}^{-1}(F) = 0$, it follows
		$$
		\begin{aligned}
		|\dot \gamma(t,F,G)|
		&= |\exp\big( F , t\exp_{F}^{-1}(G) \big) \exp_{F}^{-1}(G)| \\
		&\le c\, |\exp_{F}^{-1}(G)|
		= c\, |\exp_{F}^{-1}(G) - \exp_{F}^{-1}(F)|.
		\end{aligned}
		$$
		Since $\exp(F,\,\cdot\,)$ is a $C^1$-diffeomorphism around the origin (see p.\,\pageref{eq:shortest-exp}), $\exp_F^{-1}$ is Lipschitz in $K$, that is, for some $c>0$ it holds
		$$
		\Delta_I\Big(\exp_{F}^{-1}(G) - \exp_{F}^{-1}(F)\Big) \le c\,D(F,G),
		$$
		where $\Delta_I$ is the Minkowski norm on p.\,\pageref{D0} and $D$ is defined in \eqref{eq:D}.
		Therefore, recalling \ref{D2} and up to redefining the constant $c$, we obtain
		\begin{multline*}
		|\exp_{F}^{-1}(G) - \exp_{F}^{-1}(F)| \le c_4^{-1}\Delta_I\Big(\exp_{F}^{-1}(G) - \exp_{F}^{-1}(F)\Big) \\
		\le c\,D(F,G) \le c\,|F-G|, 
		\end{multline*}
		where the last inequality follows from Lemma~\ref{stm:DvsEucl}. We hence conclude
		\begin{equation}\label{eq:gammadot}
		|\dot \gamma(t,F,G)| \le c\,|F-G|,
		\end{equation}
		so that
		\begin{equation}\label{p4.4}
			\begin{aligned}
				I_{\rm reg} &\le \ia |\nP_1|^q \dd x + \int_B |\nP_2|^q \dd x \\
				& \quad + c \left[\left(\frac{2N}{\delta}\right)^q\int_{B \cap D_j} |P_1-P_2|^q \dd x + \int_{B \cap D_j} \big(|\nP_1|^q + |\nP_2|^q \big) \dd x \right].
			\end{aligned}
		\end{equation}
		By gathering \eqref{p4.1}--\eqref{p4.4} we obtain
		\begin{align*}
			&\cF_k\left(\varphi_j y_1+(1-\varphi_j)y_2,\gamma\circ(\varphi_j,P_2,P_1),A' \cup B\right) \\[2 mm]
			&\le \cF_k(y_1,P_1,A) + \cF_k(y_2,P_2,B) \\
			& \quad + c \int_{B \cap D_j}   \left(1 + |\ny_1|^2 + |\ny_2|^2 + \left(\frac{2N}{\delta}\right)^2|y_1-y_2|^2\right) \dd x \\
			& \quad + c\left[\mathcal{L}^3(B \cap D_j) + \left(\frac{2N}{\delta}\right)^q\int_{B \cap D_j} |P_1-P_2|^q \dd x + \int_{B \cap D_j} \big(|\nP_1|^q + |\nP_2|^q\big) \dd x\right].
		\end{align*}
		Now, note that
		\begin{align*}
			&\sum_{j=1}^{N} \int_{B \cap D_j}   \left(1 + |\ny_1|^2 + |\ny_2|^2 + |\nP_1|^q + |\nP_2|^q\right) \dd x \\
			&\le \int_{(A \setminus A') \cap B}   \left(1 + |\ny_1|^2 + |\ny_2|^2 + |\nP_1|^q + |\nP_2|^q\right) \dd x,
		\end{align*}
		thus there certainly exists $\ell\in {1,\dots,N}$ such that
		\begin{align*}
			&\int_{B \cap D_{\ell}}  \left(1 + |\ny_1|^2 + |\ny_2|^2 + |\nP_1|^q + |\nP_2|^q\right) \dd x \\
			&\le \frac1N \int_{(A \setminus A') \cap B}   \left(1 + |\ny_1|^2 + |\ny_2|^2 + |\nP_1|^q + |\nP_2|^q\right) \dd x \\
			&\le \frac1N \mathcal{L}^3((A \setminus A') \cap B) + \frac{c}{N} \Big(\cF_k(y_1,P_1,A) + \cF_k(y_2,P_2,B)\Big)
		\end{align*}
		where in the last inequality we exploited the fact that the set $(A \setminus A') \cap B$ is contained in both $A$ and $B$, together with the growth condition from below in \ref{E-growth}. Therefore, we obtain
		\begin{align*}
			&\cF_k\left(\varphi_{\ell} y_1+(1-\varphi_{\ell})y_2,\gamma\circ(\varphi_\ell,P_2,P_1),A' \cup B\right) \\[2 mm]
			&\quad\le \left(1 + \frac{c}{N}\right)\Big(\cF_k(y_1,P_1,A) + \cF_k(y_2,P_2,B)\Big) \\
			& \quad\quad + c \int_{(A\cap B)\setminus A'}\left[\left(\frac{2N}{\delta}\right)^2|y_1-y_2|^2+\left(\frac{2N}{\delta}\right)^q|P_1-P_2|^q\right]\! \dd x + \frac{2c}{N} \mathcal{L}^3((A \setminus A') \cap B).
		\end{align*}
		Finally, choosing $N$ such that
		$$
		\begin{cases}
			\displaystyle\frac{c}{N} < \sigma, \\[4 mm]
			\displaystyle \frac{2c\,\mathcal{L}^3((A \setminus A') \cap B)}{N} < \sigma,
		\end{cases}
		$$
		and letting
		$$M_\sigma \coloneqq c\left[\left(\frac{2N}{\delta}\right)^2 + \left(\frac{2N}{\delta}\right)^q\right],$$
		we see \eqref{FE} is satisfied, and the proof is complete.
	\end{proof}
	
	Still following the strategy outlined in Subsection~\ref{sec:Gammaconv},
	we analyze the properties of the $\Gamma(\tau)$-lower and $\Gamma(\tau)$-upper limit
	of our sequence $\cF_k$ when regarded as set functions.
	We recall that (see, e.g., \cite[Chapter~7]{BrDFr}),
	if $\{\eps_k\}_{k\in\nat}$ is such that $\eps_k \to 0^+$
	and if $(y,P,A) \in W^{1,2}(\Omega;\real^3)\times W^{1,q}(\Omega;\SL(3))\times \cA(\Omega)$,
	we have
	\begin{equation}\label{infsup}
		\begin{split}
			\cF'(y,P,A)
			& \coloneqq \Gamma(\tau)\mbox{-}\liminf_{k\to+\infty} \cF_k(y,P,A) \\
			& \coloneqq \inf \left\{
				\liminf_{k\to+\infty} \cF_k(y_k,P_k,A)
				: (y_k,P_k) \stackrel{\tau}{\to} (y,P)
				\right\}, \\
			\cF''(y,P,A)
			& \coloneqq \Gamma(\tau)\mbox{-}\limsup_{k\to+\infty} \cF_k(y,P,A) \\
			& \coloneqq \inf \left\{
					\limsup_{k\to+\infty} \cF_k(y_k,P_k,A)
					: (y_k,P_k) \stackrel{\tau}{\to} (y,P)
					\right\},
		\end{split}
	\end{equation}
	and that $\{\cF_k\}$ $\Gamma(\tau)$-converges to $\cF$
	if and only if $\cF=\cF'=\cF''$.
	A key-step is the following ``almost subadditivity'' result, which is a modification of \cite[Proposition 11.5]{BrDFr} or \cite[Proposition 18.3]{DalM}.
	It is a consequence of the fundamental estimate, but it does not depend on the explicit form of the functionals at stake.
	
	\begin{proposition}\label{almsub}
		Let $\{\cF_k\}_{k\in\nat}$, $\cF'$ and $\cF''$ be as above.
		Then for all $A,A',B \in \cA(\Omega)$ with $A' \Subset A$ and
		for all $(y,P) \in W^{1,2}(\Omega;\real^3)\times W^{1,q}(\Omega;K)$
		\begin{align*}
			\cF'(y,P,A' \cup B) &\le \cF'(y,P,A) + \cF''(y,P,B), \\
			\cF''(y,P,A' \cup B) &\le \cF''(y,P,A) + \cF''(y,P,B).
		\end{align*}
	\end{proposition}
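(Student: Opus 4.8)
The plan is to deduce both inequalities from the fundamental estimate \eqref{FE} by the classical splicing argument (cf. \cite[Proposition~11.5]{BrDFr} or \cite[Proposition~18.3]{DalM}); the only point that is not routine is checking that the spliced plastic strains still converge for the topology $\tau$ of \eqref{eq:tau}. We may assume the right-hand sides are finite, otherwise there is nothing to prove. Since $\tau$ is metrizable, the infima in \eqref{infsup} are attained by suitable recovery sequences (a standard property of $\Gamma$-limits in metric spaces, see e.g. \cite{DalM}), so I would first choose $\tau$-convergent sequences $(y_k',P_k')\to(y,P)$ realizing $\cF'(y,P,A)$ as a genuine $\liminf$ --- respectively $\cF''(y,P,A)$ as a $\limsup$, for the second inequality --- and $(y_k'',P_k'')\to(y,P)$ realizing $\cF''(y,P,B)$ as a $\limsup$; by finiteness these lie in $W^{1,2}(\Omega;\real^3)\times W^{1,q}(\Omega;K)$ for $k$ large. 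Fixing $\sigma>0$ and applying Proposition~\ref{stm:fundest} to the pairs $(y_k',P_k')$ and $(y_k'',P_k'')$, I obtain for each $k$ a cut-off function $\varphi_k$ between $A'$ and $A$ and the shortest-path map $\gamma$; setting $z_k\coloneqq\varphi_k y_k'+(1-\varphi_k)y_k''$ and $Q_k\coloneqq\gamma\circ(\varphi_k,P_k'',P_k')$, estimate \eqref{FE} becomes
\[
\cF_k(z_k,Q_k,A'\cup B)\le(1+\sigma)\big(\cF_k(y_k',P_k',A)+\cF_k(y_k'',P_k'',B)\big)+M_\sigma\,r_k+\sigma,
\]
where $r_k\coloneqq\int_{(A\cap B)\setminus A'}(|y_k'-y_k''|^2+|P_k'-P_k''|^q)\dd x$. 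Since $y_k',y_k''\to y$ in $L^2$ and $P_k',P_k''\to P$ uniformly (hence also in $L^q(\Omega;\matr)$, as $\Omega$ is bounded), we have $r_k\to 0$.

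Next I would check that $(z_k,Q_k)\stackrel{\tau}{\to}(y,P)$. For the deformations, $z_k-y=\varphi_k(y_k'-y)+(1-\varphi_k)(y_k''-y)$ together with $0\le\varphi_k\le1$ and the triangle inequality give $\|z_k-y\|_{L^2}\le\|y_k'-y\|_{L^2}+\|y_k''-y\|_{L^2}\to0$. For the plastic strains, I would use that, by Proposition~\ref{stm:fundest} and Remark~\ref{stm:K-in-H2}, the map $\gamma$ is well defined, takes values in $K$, is continuous --- hence uniformly continuous --- on the compact set $[0,1]\times K\times K$, and satisfies $\gamma(t,F,F)=F$ for all $t\in[0,1]$ and $F\in K$ (the unique shortest path from $F$ to itself being constant). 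Hence, given $\eta>0$ there is $\rho>0$ such that $|F-F'|+|G-G'|<\rho$ implies $\sup_{t\in[0,1]}|\gamma(t,F,G)-\gamma(t,F',G')|<\eta$; applying this pointwise with $(F,G)=(P_k''(x),P_k'(x))$ and $(F',G')=(P(x),P(x))$, and using that $P_k',P_k''\to P$ uniformly, I obtain $\|Q_k-P\|_{L^\infty}<\eta$ for $k$ large, so $Q_k\to P$ uniformly.

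Finally, from \eqref{infsup} and $(z_k,Q_k)\stackrel{\tau}{\to}(y,P)$ we have $\cF'(y,P,A'\cup B)\le\liminf_k\cF_k(z_k,Q_k,A'\cup B)$ and $\cF''(y,P,A'\cup B)\le\limsup_k\cF_k(z_k,Q_k,A'\cup B)$. Inserting the displayed estimate, using $r_k\to0$, the elementary bounds $\liminf(a_k+b_k)\le\liminf a_k+\limsup b_k$ and $\limsup(a_k+b_k)\le\limsup a_k+\limsup b_k$, and the optimality of the chosen sequences, I would arrive at
\[
\cF'(y,P,A'\cup B)\le(1+\sigma)\big(\cF'(y,P,A)+\cF''(y,P,B)\big)+\sigma
\]
and, likewise, $\cF''(y,P,A'\cup B)\le(1+\sigma)\big(\cF''(y,P,A)+\cF''(y,P,B)\big)+\sigma$; letting $\sigma\to0$ yields the claim.

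The step I expect to be the genuine obstacle, relative to the classical vector-valued setting, is the uniform convergence $Q_k\to P$: it is here that the Finsler geometry enters, through the continuous (indeed smooth) dependence of the shortest path on its endpoints (Proposition~\ref{stm:Bao} and Remark~\ref{stm:K-in-H2}) and the compactness and geodesic convexity of $K$, which together replace the trivial convex-combination argument available when the target is a linear space.
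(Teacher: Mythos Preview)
Your proof is correct and follows essentially the same route as the paper's: choose recovery sequences, apply the fundamental estimate \eqref{FE} from Proposition~\ref{stm:fundest}, verify $\tau$-convergence of the spliced pair, pass to the $\liminf$/$\limsup$, and let $\sigma\to0$. The one noteworthy difference is in the justification of $Q_k\to P$ uniformly: the paper simply invokes the explicit exponential-map representation \eqref{eq:shortest-exp}, whereas you argue via uniform continuity of $\gamma$ on the compact set $[0,1]\times K\times K$ together with $\gamma(t,F,F)=F$; your argument is slightly more self-contained and avoids unpacking the Finsler exponential, while the paper's is terser but relies on the smoothness already recorded in \eqref{eq:shortest-exp}.
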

	
	\begin{proof}
		We need to prove the inequalities only when
		$\cF'(y,P,A)$, $\cF''(y,P,A)$ and  $\cF''(y,P,B)$ are finite.
		By the definitions in \eqref{infsup}, there exist sequences $\{y'_k\}_k$, $\{P'_k\}_k$, $\{y''_k\}_k$, $\{P''_k\}_k$ such that
		\begin{gather*}
			(y'_k,P'_k) \stackrel{\tau}{\to} (y,P), \quad
			(y''_k,P''_k) \stackrel{\tau}{\to} (y,P),\\[3pt]
			\cF'(y,P,A) = \liminf_{k\to+\infty} \cF_k(y'_k,P'_k,A), \quad
			\cF''(y,P,B) = \limsup_{k\to+\infty} \cF_k(y''_k,P''_k,B).
		\end{gather*}
		Let us fix $\sigma > 0$. The fundamental estimate \eqref{FE} gives a constant $M_\sigma$,
		a sequence $\{\varphi_k\}_k$ of cut-off functions between $A'$ and $A$,
		and a sequence $\{\gamma_k\}_k$ of shortest paths from $P''_k$ to $P'_k$ such that
		\begin{equation}\label{p4.5}
			\begin{aligned}
				&\cF_k\left(\varphi_k y'_k+(1-\varphi_k)y''_k,\gamma_k\circ(\varphi_k,P_k'', P_k'),A' \cup B\right) \\[2 mm]
				&\le (1+\sigma)\big(\cF_k(y'_k,P'_k,A) + \cF_k(y''_k,P''_k,B)\big) \\[3pt]
				& \quad + M_\sigma \int_{(A\cap B)\setminus A'} \left(|y'_k-y''_k|^2+|P'_k-P''_k|^q\right)\dd x + \sigma.
			\end{aligned}
		\end{equation}
		Recalling \eqref{eq:shortest-exp}, we find
		$$
		\begin{array}{rcll}
			\varphi_k y'_k+(1-\varphi_k)y''_k &\to& y &\text{strongly in }L^2(A' \cup B;\real^3), \\[2 mm]
			\gamma_k\circ(\varphi_k,P_k'', P_k') &\to& P &\text{uniformly.}
		\end{array}
		$$
		Taking the lower limit in \eqref{p4.5} we obtain
		\begin{align*}
			\cF'(y,P,A' \cup B) &\le \liminf_{k\to+\infty} \cF_k\left(\varphi_k y'_k+(1-\varphi_k)y''_k,\gamma_k\circ(\varphi_k,P_k'',P_k'),A' \cup B\right) \\
			&\le (1+\sigma)\left(\liminf_{k\to+\infty} \cF_k(y'_k,P'_k,A) + \limsup_{k\to+\infty} \cF_k(y''_k,P''_k,B)\right) + \sigma \\
			&= (1+\sigma)\left(\cF'(y,P,A) + \cF''(y,P,B)\right) + \sigma,
		\end{align*}
		and the first inequality in the statement follows by letting $\sigma \to 0$. The second one is obtained by taking the upper limit in \eqref{p4.5} and arguing in a similar way.
	\end{proof}
	
	Building on Proposition~\ref{almsub},
	we next establish subadditivity and inner regularity.
	We exploit the fact that
	$\cF'(y,P,\,\cdot\,)$ and $\cF''(y,P,\,\cdot\,)$ are increasing set functions,
	as well as the growth condition
	\begin{equation}\label{eq:limsup-est}
		\cF''(y,P,A) \le c \ia \left(1 + |\ny P^{-1}|^2 + |\nP|^q\right) \dd x.
	\end{equation}
	
	\begin{proposition}[Subadditivity and inner regularity]\label{sub}
		Let $\{\cF_k\}_{k\in\nat}$, $\cF'$ and $\cF''$ be as before.
		For all $(y,P) \in W^{1,2}(\Omega;\real^3)\times W^{1,q}(\Omega;K)$
		\begin{enumerate}
			\item$\cF'(y,P,\,\cdot\,)$ and $\cF''(y,P,\,\cdot\,)$ are inner regular,
			\item $\cF''(y,P,\,\cdot\,)$ is subadditive.
		\end{enumerate}
	\end{proposition}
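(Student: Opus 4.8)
The plan is to obtain both assertions from ingredients already available: the almost subadditivity of Proposition~\ref{almsub}, the growth bound \eqref{eq:limsup-est}, and the fact that $\cF'(y,P,\,\cdot\,)$ and $\cF''(y,P,\,\cdot\,)$ are increasing set functions, which is immediate from the definitions in \eqref{infsup}. A preliminary observation I would record is that, since $P\in W^{1,q}$ and $|P^{-1}|\le c_K$ on $K$ by \eqref{Pbound}, the density $g\coloneqq 1+|\ny P^{-1}|^2+|\nP|^q$ belongs to $L^1(\Omega)$; hence $E\mapsto\int_E g\dd x$ is absolutely continuous with respect to $\mathcal{L}^3$, and by \eqref{eq:limsup-est} the quantity $\cF''(y,P,A\setminus\overline{A'})$ can be made as small as desired by choosing $A'\Subset A$ with $\mathcal{L}^3(A\setminus\overline{A'})$ small. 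In particular $\cF'$ and $\cF''$ are finite-valued on $W^{1,2}(\Omega;\real^3)\times W^{1,q}(\Omega;K)$, so Proposition~\ref{almsub} may be freely invoked.

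For the \emph{inner regularity}, fix $A\in\cA(\Omega)$ and let $\cF^\star$ stand for either $\cF'$ or $\cF''$. The inequality $\cF^\star(y,P,A)\ge\sup\{\cF^\star(y,P,B):B\Subset A\}$ is monotonicity, so only the reverse needs work. Given $\delta>0$, I would pick $A'\Subset A$ with $\cF''(y,P,A\setminus\overline{A'})<\delta$ and then interpose open sets with $A'\Subset A''\Subset A'''\Subset A$. Since $\overline{A'}\subseteq A''$, the identity $A=A''\cup(A\setminus\overline{A'})$ holds in the strict set-theoretic sense, and Proposition~\ref{almsub}, applied with $A''$ in the role of the compactly contained set, $A'''$ in that of the enlarged one, and $A\setminus\overline{A'}$ in that of $B$, gives
\[
	\cF^\star(y,P,A)\le\cF^\star(y,P,A''')+\cF''(y,P,A\setminus\overline{A'})<\sup_{B\Subset A}\cF^\star(y,P,B)+\delta ,
\]
because $A'''\Subset A$. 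Letting $\delta\to0$ settles this part for both $\cF'$ and $\cF''$.

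For the \emph{subadditivity} of $\cF''$, fix $A_1,A_2\in\cA(\Omega)$ and $\delta>0$; the values $\cF''(y,P,A_i)$ are finite by \eqref{eq:limsup-est}. Using the inner regularity just proved, I would choose $B\Subset A_1\cup A_2$ with $\cF''(y,P,A_1\cup A_2)\le\cF''(y,P,B)+\delta$; since $\overline{B}$ is compact and covered by $A_1\cup A_2$, a standard shrinking argument produces open sets $A_1'\Subset A_1$ and $A_2'\Subset A_2$ with $B\subseteq A_1'\cup A_2'$. By monotonicity $\cF''(y,P,A_1\cup A_2)\le\cF''(y,P,A_1'\cup A_2')$, while Proposition~\ref{almsub} (with $A_1'\Subset A_1$ and $A_2'$ in the role of $B$) yields $\cF''(y,P,A_1'\cup A_2')\le\cF''(y,P,A_1)+\cF''(y,P,A_2')$; since $\cF''(y,P,A_2')\le\cF''(y,P,A_2)$ once more by monotonicity, letting $\delta\to0$ gives $\cF''(y,P,A_1\cup A_2)\le\cF''(y,P,A_1)+\cF''(y,P,A_2)$.

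I do not expect any of these steps to be genuinely hard once Proposition~\ref{almsub} is in hand: this is the classical De Giorgi--Letta bookkeeping. The points that call for care are verifying that $A=A''\cup(A\setminus\overline{A'})$ is an honest set identity and not merely one up to a null set, arranging the chain $A'\Subset A''\Subset A'''\Subset A$ so that the set left over on the right-hand side of Proposition~\ref{almsub} is still compactly contained in $A$, and, at the very outset, the integrability $g\in L^1(\Omega)$, which is precisely where the compactness of $K$ enters through \eqref{Pbound}.
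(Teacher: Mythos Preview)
Your argument is correct and follows essentially the same route as the paper: inner regularity from Proposition~\ref{almsub} together with the growth bound \eqref{eq:limsup-est}, then subadditivity of $\cF''$ from inner regularity plus Proposition~\ref{almsub} again. One small slip: in the subadditivity step the sentence ``By monotonicity $\cF''(y,P,A_1\cup A_2)\le\cF''(y,P,A_1'\cup A_2')$'' is false as written (the inclusion goes the wrong way); what you mean is $\cF''(y,P,B)\le\cF''(y,P,A_1'\cup A_2')$, which then combines with $\cF''(y,P,A_1\cup A_2)\le\cF''(y,P,B)+\delta$ to give the chain you want --- this is clearly just a typo, since you correctly let $\delta\to0$ at the end.
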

	\begin{proof}
		We prove the two statements separately.
		
		\noindent\textit{(1)}
		Let us focus on $\cF'$.
		According to Definition~\ref{setfunct} we need to prove that
		$$\cF'(y,P,A) = \sup\left\lbrace \cF'(y,P,B) : B \in \cA(\Omega), \, B \Subset A \right\rbrace.$$
		Since $\cF'(y,P,\,\cdot\,)$ is an increasing set function,
		it is trivial that 
		$\cF'(y,P,A)$ is larger than the supremum in the formula above.
		As for the reverse inequality, let us fix $A \in \cA(\Omega)$ and let $C \subset A$ be compact. There exist $B,B' \in \cA(\Omega)$ such that $C \subset B' \Subset B \Subset A$.
		Then $B' \cup (A\setminus C) = A$.
		We now apply Proposition~\ref{almsub}:
		\begin{align*}
			\cF'(y,P,A) & = \cF'(y,P,B' \cup (A\setminus C)) \\
			& \le \cF'(y,P,B) + \cF''(y,P,A\setminus C) \\
			&\le \sup\left\lbrace \cF'(y,P,B) : B \in \cA(\Omega), \, B \Subset A \right\rbrace + c \int_{A \setminus C} \left(1 + |\ny P^{-1}|^2 +  |\nP|^q\right) \dd x.
		\end{align*}
		By the arbitrariness of $C$, we can let $C$ invade $A$ and we obtain
		$$\cF'(y,P,A) \le \sup\left\lbrace \cF'(y,P,B) : B \in \cA(\Omega), \, B \Subset A \right\rbrace.$$
		
		The same argument applies to $\cF''$.
		
		\noindent\textit{(2)} Fix $A,B \in \cA(\Omega)$ and let $C \in \cA(\Omega)$
		satisfy $C \Subset A \cup B$. We find $A' \in \cA(\Omega)$ such that $A' \Subset A$ and that $C \subset A' \cup B$. Note that $\overline{C} \setminus B$ is compact and $\overline{C} \setminus B \subset A$. Now,
		by the monotonicity of $\cF''(y,P,\,\cdot\,)$ and Proposition~\ref{almsub},
		$$\cF''(y,P,C) \le \cF''(y,P,A' \cup B) \le \cF''(y,P,A) + \cF''(y,P,B).$$
		By taking the supremum over $C$, inner regularity yields
		$$\cF''(y,P,A \cup B)  \le \cF''(y,P,A) + \cF''(y,P,B).$$
	\end{proof}

By gathering the previous results,
we infer the existence of a $\Gamma$-limit admitting an integral representation.

\begin{proof}[Proof of Theorem~\ref{stm:homo-fin-plast-bis}]
Let $\{\eps_k\}_{k\in\nat}$ be such that $\eps_k \to 0^+$, and let $\cF'$ and $\cF''$ be as in \eqref{infsup}.
It follows from the definitions that they are increasing set functions,
and Proposition~\ref{sub} yields inner regularity for both and subadditivity for $\cF''$.
By standard arguments (see, e.g., \cite[Theorem~10.3]{BrDFr}),
we deduce that, upon extraction of a subsequences, 
$$\cF(y,P,A) \coloneqq \Gamma(\tau)\mbox{-}\lim_{k\to+\infty} \cF_k(y,P,A)$$
exists for all triple $(y,P,A)$.
From the definition of $\Gamma$-limit and from the fact that $\cF_k(y,P,\,\cdot\,)$ is a measure for each $k$,
it follows
that $\cF(y,P,\,\cdot\,)$ is superadditive.
We are then in a position to apply the De Giorgi-Letta criterion (see Subsection~\ref{sec:Gammaconv}),
which ensures that $\cF(y,P,\,\cdot\,)$ is the restriction of a Borel measure to $\cA(\Omega)$
for $(y,P)\in W^{1,2}(A;\real^3) \times W^{1,q}(A;K)$.
Thanks to \eqref{eq:limsup-est} we infer also that it is
absolutely continuous with respect to the Lebesgue measure.

Thus, by Radon-Nikodym theorem there exists $f \in L^1_{\rm loc}(\real^3)$ such that
$$\cF(y,P,A) = \ia f(x) \dd x.$$
\end{proof}

\subsection{Characterization of the limiting energy density}
So far, we have proved a $\Gamma(\tau)$-compactness result
for the functionals \eqref{FepsA}, and
we have shown that the limit is actually an integral functional.
We now provide an explicit formula for the limiting energy density.

\begin{proof}[Proof of Theorem~\ref{stm:homo-fin-plast}]
Let $y \in W^{1,2}(\Omega;\real^3)$, $P \in W^{1,q}(\Omega;K)$.
Our aim is to identify $f$ in \eqref{Fint} as
\begin{equation}\label{g}
	f(x) = W_\homo(\ny(x),P(x)) + H_\homo(P(x)) + |\nP(x)|^q \quad \text{ for a.\,e.\ } x \in \Omega.
\end{equation}
This also entails (see \cite[Proposition 7.11]{BrDFr}) that
the whole family $\{\cF_\eps\}$ $\Gamma(\tau)$-converges to $\cF$
as in the statement of the theorem

For all $x \in \real^3$, $F \in \matr$ and $G \in K$ let us define
$$\widetilde W(x,F,G) \coloneqq W(x,F G^{-1}).$$
From \ref{E1} and \ref{E-growth} we deduce that
\begin{enumerate}
	\item $\widetilde W$ is a Borel function and $\widetilde W(\,\cdot\,,F,G)$ is $Q$-periodic;
	\item $\widetilde W(x,\,\cdot\,,G)$ is $2$-coercive and has at most quadratic growth,
	uniformly in $x$ and $G$:
	there exist $0 < \tilde c_1 \le \tilde c_2$ such that
	for a.\,e.\ $x \in \real^3$, for all $F \in \matr$ and for all $G \in K$
	$$ \tilde c_1 |F|^2 \le \widetilde W(x,F,G) \le \tilde c_2\left(|F|^2+1\right).$$
\end{enumerate}
Note that the independence of $\tilde c_1$ and $\tilde c_2$ from $G$ is a consequence
of \eqref{stm:Pbound} and \eqref{Pbound}, respectively.

For $A\in\mathcal A(\Omega)$, we now introduce the functional
\begin{equation*}
	\cG_k(y,G,A) \coloneqq \ia \left[
		\widetilde W\left(\frac{x}{\eps_k},\ny(x),G\right)
		+
		H\left(\frac{x}{\eps_k},G\right)
	\right]\! \dd x
\end{equation*}
where $G\in K$ is fixed, and we observe that
$$
\cF_k(y,P,A) =
\ia \left[ \widetilde W\left(\frac{x}{\eps_k},\ny(x),P(x)\right) + H\left(\frac{x}{\eps_k},P(x)\right) + |\nP(x)|^q \right] \!\!\dd x.
$$
From Theorem~\ref{stm:homo-fin-plast-bis} we know that
$$
\cF(y,P,A)
\coloneqq \Gamma(\tau)\mbox{-}\lim_{k\to+\infty} \cF_k(y,P,A)
= \ia f(x) \dd x,$$
while, if $x_0\in A$ and $P_0 \coloneqq P(x_0)$,
Theorem~\ref{stm:hom-classic} and Riemann-Lebesgue lemma on rapidly oscillating functions yield
\begin{equation}\label{convG}
	\begin{aligned}
		\cG(y,P_0,A)
		&\coloneqq \Gamma(L^2)\mbox{-}\lim_{k\to+\infty} \cG_k(y,P_0,A) \\
		&= \ia \Big(W_\homo \big( \ny(x),P(x_0) \big) + H_\homo(P(x_0))\Big) \dd x.
	\end{aligned}
\end{equation}
We next show that \eqref{g} holds by exploiting these two convergence results.

From now on, let $x_0\in\Omega$ be a Lebesgue point of the functions $f$, $\ny$ and $\nP$.
Notice that almost every $x_0\in\Omega$ has such property.
Let $A\in \mathcal{A}(\Omega)$ contain $x_0$,
and let $\{(y_k,P_k)\}\subset W^{1,2}(A;\real^3) \times W^{1,q}(A;K)$ be a generic sequence.
We use \ref{E-lip} and \eqref{Pbound} to estimate the difference
between the elastic contribution in $\cF_k(y_k,P_k,A)$ and $\cG_k(y_k,P_0,A)$:
\begin{align*}
	&\ia \left| 
	\widetilde W \left(\frac{x}{\eps_k},\ny_k,P_k\right)
	- \widetilde W \left(\frac{x}{\eps_k},\ny_k,P_0\right)
	\right|\!\! \dd x \nonumber\\
	&\qquad \le c_3 \ia \Big(
	1 + |\ny_k P_k^{-1}| + |\ny_k P^{-1}_0| \Big)
	|\ny_k| \left|P_k^{-1} - P^{-1}_0\right|
	\!\!\dd x \nonumber\\
	&\qquad \le c_3  \ia
	\big(1 + 2c_K |\ny_k| \big) |\ny_k|
	\Big(\left|P_k^{-1} - P^{-1}\right| + \left|P^{-1} - P^{-1}_0)\right|\Big)
	\! \dd x \nonumber\\
	&\qquad \le c\,
	\Big( \mathcal{L}^3(A)+ \| \nabla y_k\|^2_{L^2(A)}\Big) 
	\Big( \| P_k^{-1} - P^{-1} \|_{L^\infty(A)} +  \| P^{-1} - P^{-1}_0 \|_{L^\infty(A)}\Big)	,
\end{align*}
where $c$ is a constant independent of $k$ and $A$.
We now use \ref{H2} to estimate the difference
between the hardenings in $\cF_k(y_k,P_k,A)$ and $\cG_k(y_k,P_0,A)$:
\begin{align*}
	\ia \left|H\left(\frac{x}{\eps_k},P_k\right) - H\left(\frac{x}{\eps_k},P_0\right) \right| \dd x \le c \, \mathcal{L}^3(A) \Big( \| P_k - P \|_{L^\infty(A)} +  \| P - P_0 \|_{L^\infty(A)}\Big).
\end{align*}
By the definitions of $\cF_k$ and $\cG_k$
we obtain
\begin{equation}\label{distG}
	\begin{split}
		&\left| \cF_k(y_k,P_k,A) - \ia |\nP_k|^q \dd x-\cG_k(y_k,P_0,A)\right|
		\\ &\qquad\leq c\,
		\Big( \mathcal{L}^3(A)+  \| \nabla y_k\|^2_{L^2(A)} \Big)
		\Big( \| P_k^{-1} - P^{-1} \|_{L^\infty(A)} + \| P^{-1} - P^{-1}_0 \|_{L^\infty(A)}\Big) \\
		&\qquad\quad + c \, \mathcal{L}^3(A) \Big( \| P_k - P \|_{L^\infty(A)} +  \| P - P_0 \|_{L^\infty(A)}\Big).
	\end{split}
\end{equation}

We now prove that
\begin{equation}\label{eq:f>Whom}
	f(x) \ge W_\homo\big( \ny(x),P(x) \big) + H_\homo(P(x)) + |\nP(x)|^q \quad \text{ for a.\,e.\ } x \in \Omega.
\end{equation}
To this aim, we select a recovery sequence
$\{(y_k,P_k)\}\subset W^{1,2}(A;\real^3) \times W^{1,q}(A;K)$
for $\cF(y,P,A)$, namely
\begin{gather}
	(y_k,P_k) \stackrel{\tau}{\to} (y,P), \quad
	\lim_{k\to+\infty} \cF_k(y_k,P_k,A) = \cF(y,P,A). \label{eq:rec-tildeF}
\end{gather}
Owing to the growth assumptions on the energy densities,
we can without loss of generality suppose that
\begin{gather*}
	y_k \rightharpoonup y \quad\text{weakly in } W^{1,2}(A;\real^3),
	\qquad
	P_k \rightharpoonup P \quad\text{weakly in } W^{1,q}(A;K).
\end{gather*}
From \eqref{distG} coupled with the $2$-coercivity of $\widetilde W$,
recalling Remark~\ref{stm:conv-inv} about the convergence of inverses,
\eqref{convG} and \eqref{eq:rec-tildeF}, we infer
\begin{align*}
	\cF(y,P,A) &= 
	\lim_{k\to+\infty} \cF_k (y_k,P_k,A) \\
	& \geq \liminf_{k\to+\infty} \cG_k(y_k,P_0,A) +\liminf_{k\to+\infty} \ia |\nP_k|^q \dd x\\
	&\quad - c\, \| P^{-1} - P^{-1}_0 \|_{L^\infty(A)}
	\Big( \mathcal{L}^3(A)
	+ \lim_{k\to+\infty} \cF_k(y_k,P_k,A)
	\Big) - c \, \mathcal{L}^3(A) \,  \| P - P_0 \|_{L^\infty(A)} \\
	&\geq \cG(y,P_0,A) +\ia |\nP|^q \dd x \\
	& \quad	-c\,  \| P^{-1} - P^{-1}_0 \|_{L^\infty(A)}
	\Big( \mathcal{L}^3(A) + \cF(y,P,A) \Big) - c \, \mathcal{L}^3(A) \,  \| P - P_0 \|_{L^\infty(A)}.
\end{align*}
Observe that
the lower bound on $\cF(y,P,A)$ 
that we have just proved
holds for any set $A$ containing $x_0$.
In particular,
we select $A= B_r(x_0)$ and 
divide both sides of the corresponding estimates by $\mathcal{L}^3(B_r(x_0))$.
By letting $r \to 0$,
we apply Lebesgue differentiation theorem
to deduce \eqref{eq:f>Whom} for $x=x_0$.
Indeed,
\begin{equation*}
	\lim_{r \to 0} \| P^{-1} - P^{-1}_0 \|_{L^\infty(B_r(x_0))}
	\left( 1 + \frac{1}{\mathcal{L}^3(B_r(x_0))}\cF(y,P,B_r(x_0))\right) =0,
\end{equation*}
because \eqref{eq:limsup-est} grants
\[
\cF(y,P,A) \le c \ia \left(1 + |\ny P^{-1}|^2 + |\nP|^q\right) \dd x.
\]

Eventually, we are only left to prove that
\begin{equation}\label{eq:f<Whom}
	f(x) \le W_\homo\big(\ny(x),P(x)\big) + H_\homo(P(x)) + |\nP(x)|^q \quad \text{ for a.\,e.\ } x \in \Omega.
\end{equation}
To establish the estimate, we let $\{y_k\}\subset W^{1,2}(A;\real^3)$
be a recovery sequence for $\cG(y,P_0,A)$, that is,
\[
\lim_{k\to+\infty} \cG_k(y_k,P_0,A) = \cG(y,P_0,A),
\]
with $ y_k \rightharpoonup y$ weakly in $W^{1,2}(A;\real^3)$.
We consider estimate \eqref{distG}
for this particular $\{y_k\}$ and the constant sequence given by $P_k = P$ for all $k$.
Exploiting this time the $2$-coercivity of $\cG_k$ we obtain
\begin{align*}
	\cF(y,P,A) &\le
	\limsup_{k\to+\infty} \cF_k\left(y_k,P,A\right) \\
	&\le \lim_{k\to+\infty} \cG_k\big(y_k,P_0,A\big) + \ia |\nP|^q \dd x \\
	& \quad + c\, \| P^{-1} - P^{-1}_0 \|_{L^\infty(A)}
	\Big( \mathcal{L}^3(A)
	+ \lim_{k\to+\infty} \cG_k(y_k,P_0,A)
	\Big)  + c \, \mathcal{L}^3(A) \,  \| P - P_0 \|_{L^\infty(A)} \\
	&\le \cG(y,P_0,A) + \ia |\nP|^q \dd x \\
	& \quad + c \, \| P^{-1} - P^{-1}_0 \|_{L^\infty(A)} \Big(\mathcal{L}^3(A) + \cG\big(y,P_0,A\big)\Big) + c \, \mathcal{L}^3(A) \,  \| P - P_0 \|_{L^\infty(A)}.
\end{align*}

Arguing as above with $A = B_r(x_0)$ and exploiting the 2-growth conditions of $W_\homo$, we conclude that \eqref{eq:f<Whom} holds for $x = x_0$.

Since we can select almost every point in $\Omega$ as $x_0$, the conclusion follows from \eqref{eq:f>Whom} and \eqref{eq:f<Whom}.
\end{proof}

\subsection{Convergence of minimum problems}

We conclude this section with the proof of equicoercivity and convergence of (almost) minimizers, that is, Corollary~\ref{cor:comp}.
\begin{proof}[Proof of Corollary~\ref{cor:comp}]
	\textit{(i)} Let $\{(y_k,P_k)\}$ be a bounded energy sequence such that $\|y_k\|_{L^2} \le c$, uniformly in $k$. From the definition of $\cF_k$, for all $k \in \nat$
	\begin{equation}\label{eq:Lq_gradP}
	\|\nabla P_k\|_{L^q} \le c.
	\end{equation}
	Besides, for all $k$, assumption \ref{H1} and the uniform bound \eqref{Pbound} yield
	\begin{equation}\label{eq:Linfty_P}
	\|P_k\|_{L^\infty} + \|P_k^{-1}\|_{L^\infty} \le c.
	\end{equation}
	As for the elastic part, assumption \ref{E-growth} entails
	$$
	\|\nabla y_k P_k^{-1}\|_{L^2} \le c,
	$$
	which, together with \eqref{stm:Pbound} and \eqref{eq:Linfty_P}, gives
	\begin{equation}\label{eq:L2_y}
	\|\nabla y_k\|_{L^2} \le c.
	\end{equation}
	This and the uniform bound on the $L^2$-norm of $y_k$ imply that there exists $y \in W^{1,2}(\Omega;\real^3)$ such that
	$$
	y_k \to y \quad\mbox{strongly in } L^2(\Omega;\real^3)
	$$
	up to subsequences. From bounds \eqref{eq:Lq_gradP}--\eqref{eq:Linfty_P} and Morrey's embedding, we deduce the existence of $P \in W^{1,q}(\Omega;K)$ such that
	$$
	P_k \to P \quad\mbox{uniformly},
	$$
	again up to subsequences. Note that the uniform convergence of $\{P_k\} \subset W^{1,q}(\Omega;K)$ yields that $P$ attains values in $K$ as well. Recalling the definition of the topology $\tau$ in \eqref{eq:tau}, we have thus proved that $(y_k,P_k) \stackrel{\tau}{\to} (y,P)$.
	\\[10pt] \indent
	\textit{(ii)} By combining (i) with Theorem~\ref{stm:homo-fin-plast}, the result follows in a standard way, see, e.g., \cite[Theorem~7.2]{BrDFr}.
\end{proof}

%\nocite{*}

\addtocontents{toc}{\protect\setcounter{tocdepth}{1}}

\section*{Acknowledgements}
We acknowledge support
from the Austrian Science Fund (FWF) projects \href{https://doi.org/10.55776/F65}{10.55776/F65}, \href{https://doi.org/10.55776/V662}{10.55776/V662}, \href{https://doi.org/10.55776/Y1292}{10.55776/Y1292},
from the FWF-GA\v{C}R project \href{https://doi.org/10.55776/I4052}{10.55776/I4052} (19-29646L), and  
from the OeAD-WTZ project CZ04/2019 (M\v{S}MT\v{C}R 8J19AT013).

\medskip

This version of the article has been accepted for publication, after peer review (when applicable) but is not the Version of Record and does not reflect post-acceptance improvements, or any corrections. The Version of Record is available online at: \url{http://dx.doi.org/10.1007/s00526-024-02673-0}.

%\section*{Statements and Declarations}
%
%\subsection*{Competing Interests}
%The authors declare no competing interests.
%
%\subsection*{Data Availability}
%Data sharing not applicable to this article as no datasets were generated or analyzed during the current study.

\end{document}